\newcommand{\modsp}{modulation space}
\newtheorem{tm}{Theorem}[section]
\newtheorem{lemma}[tm]{Lemma}
\newtheorem{theorem}{Theorem}[section]
\newtheorem{corollary}[theorem]{Corollary}
\newtheorem{proposition}[theorem]{Proposition}
\newtheorem{remark}[theorem]{Remark}
\newcommand{\beqa}{\begin{eqnarray*}}
\newcommand{\eeqa}{\end{eqnarray*}}
\newcommand{\field}[1]{\mathbb{#1}}
\newcommand{\bR}{\field{R}}        
\newcommand{\bZ}{\field{Z}}        
\newcommand{\bC}{\field{C}}        
\def\la{\lambda}
 \def\cF{\mathcal{F}}              
 \def\cS{\mathcal{S}}
 \def\cD{\mathcal{D}}
 \def\cB{\mathcal{B}}
 \def\cM{\mathcal{M}}
 \def\cC{\mathcal{C}}
 \def\cN{\mathcal{N}}
 \def\cT{\mathcal{T}}
\def\a{\aleph}
\def\rd{\bR^d}
\def\zd{\bZ^d}
\def\intrd{\int_{\rd}}
\def\R{\right)}
\def\l{\langle}
\def\r{\rangle}
\def\<{\left<}
\def\>{\right>}
\def\mv1{M_v^1}
\def\o{\xi}
\def\a{\alpha}
\def\ZZ{\mathbb{Z}}
\def\N{\mathbb{N}}
\def\R{\mathbb{R}}
\def\Ren{\mathbb{R}^d}
\def\Renn{\mathbb{R}^{2d}}
\def\Fur{\mathcal{F}}
\def\Sn2{S_{2}(L^{2}(\Ren))}
\def\S1{S_{1}(L^{2}(\Ren))}
\def\sig00{\sigma_{0,0}}
\def\la{\langle}
\def\ra{\rangle}
\begin{document}

\title[]{Remarks on Fourier multipliers
and applications to the Wave
equation}

\author{Elena Cordero and Fabio Nicola}
\address{Department of Mathematics,  University of Torino,
Via Carlo Alberto 10, 10123
Torino, Italy}
\address{Dipartimento di Matematica, Politecnico di
Torino, Corso Duca degli
Abruzzi 24, 10129 Torino,
Italy}
\email{elena.cordero@unito.it}
\email{fabio.nicola@polito.it}

\keywords{Modulation spaces, Wiener
amalgam spaces,  Wave equation, quasi-Banach spaces}

\subjclass[2000]{42B15,35C15}

\date{}

\begin{abstract} Exploiting continuity
properties of Fourier
multipliers on modulation
spaces and Wiener amalgam
spaces, we study the
 Cauchy
problem for the NLW equation.
Local wellposedness for rough
data in modulation spaces and
Wiener amalgam spaces is
shown. The results formulated
in the framework of
modulation spaces refine
those in \cite{benyi3}. The
same arguments may apply to
obtain local wellposedness
for the NLKG equation.
\end{abstract}

\maketitle

\section{Introduction and results}
In this short note we study the  Cauchy problem for
 the nonlinear wave equation (NLW):
\begin{equation}\label{cpw}
\begin{cases}
\partial^2_t u-\Delta_x u=F(u)\\
u(0,x)=u_0(x),\,\,
\partial_t u (0,x)=u_1(x),
\end{cases}
\end{equation}
with $t\in\R$, $x\in\R^d$, $d\geq1$, $\Delta_x=\partial^2_{x_1}+\dots \partial^2_{x_d}$. $F$ is a scalar
 function on $\bC$, with
 $F(0)=0$. The solution  $u(t,x)$ is  a complex valued function of $(t,x)\in \R\times\rd$. We will consider
 the case in which $F$ is an entire
 analytic function (in the
 real sense), and we shall highlight the
 special case
 $F(u)=\lambda|u|^{2k}u$,
 $\lambda\in\mathbb{C}$,
 $k\in\mathbb{N}$,
 where we have better
 results.\par
The integral version of the
problem \eqref{cpw} has the
form
\begin{equation}\label{solop}
    u(t,\cdot)=K'(t)u_0+K(t)u_1+\mathcal{B}F(u),
\end{equation}
where\begin{equation}\label{op2}
 K'(t)=\cos(t\sqrt{-\Delta}),\quad
K(t)=\frac{\sin(t\sqrt{-\Delta})} {\sqrt{-\Delta}},\quad
\mathcal{B}=\int _0^t K(t-\tau)\cdot d\tau.
\end{equation}
Here, for every fixed $t$,
the operators $K'(t), K(t)$
in \eqref{op2} are Fourier
multipliers with symbols
$\cos(2\pi t|\o|)$,
$\sin(2\pi t|\o|)/(2\pi
|\o|)$, $\o\in\rd$. We recall
that
 given a function
$\sigma$ on $\rd$ (the
so-called symbol of the
multiplier or,
 simply, multiplier),  the
corresponding Fourier
multiplier operator
$H_{\sigma}$ is formally
defined by
\begin{equation}\label{FM}
H_\sigma f(x)=\intrd e^{2\pi i \xi}\sigma(\xi)
\hat{f}(\xi)\,d\xi.
\end{equation}
So, continuity properties for
multipliers in suitable
spaces yield estimates for
the linear part of the
equation. These latter are
then combined with abstract
iteration (contraction)
arguments to obtain local
wellposedness of \eqref{cpw}.
We deal with this program in
the framework of modulation
spaces and also for Wiener
amalgam
   spaces.
 \par
This was first considered,
for the modulation spaces, in
 \cite{benyi,benyi3,baoxiang2,baoxiang},
 where
  the classical framework of $L^p,$ Sobolev
  or Besov spaces is abandoned
  in favour of such
  spaces,
  which permit to handle initial data
   which are not covered by
   the classical results.
   Precisely, a topic of
   great interest is the
   problem of the
   wellposedness of \eqref{cpw}
   (and other dispersive
   equations) in low
   regularity spaces. The
   classical results in this
   connections (see
   \cite{lindblad-sogge,tao}
   and the references therein)
   use the scale of Soboles
   spaces $H^s$ to measure the local
   regularity of the initial
   data. The modulation
   spaces (and also the Wiener amalgam spaces considered here) provide spaces
   where the local regularity
   is instead measured by the $\Fur
   L^p$-scale (that are the
   spaces of temperate
   distributions whose Fourier
   transform is in $L^p$).
 In order to state our results,
 we first introduce the spaces we
  deal with
  (\cite{F1,grochenig,benyi3}).\par
\label{def1}
 Let $g\in\cS(\rd)$ be a non-zero
window function and consider the so-called
short-time Fourier transform
(STFT) $V_gf$ of a
function/tempered
distribution $f$ with respect
to the the window $g$:
\[
V_g f(x,\o)=\la f, M_{\o}T_xg\ra =\int e^{-2\pi i \o y}f(y)\overline{g(y-x)}\,dy,
\]
i.e.,  the  Fourier transform
$\cF$ applied to
$f\overline{T_xg}$.\par For
$s\in\R$, we consider the
weight function $\la
x\ra^{s}=(1+|x|^2)^{s/2},
x\in\rd. $ If $1\leq p,
q\leq\infty$, $s\in\R$, the
{\it modulation space}
$\mathcal{M}^{p,q}_{s}(\R^n)$
is defined as the closure of
the Schwartz class with
respect to the norm
\[
\|f\|_{\mathcal{M}_{s}^{p,q}}=\left(\intrd\left(\intrd |V_gf(x,\o)|^p dx\right)^{q/p}\la\o\ra^{sq} d\o\right)^{1/q}
\]
(with obvious modifications
when $p=\infty$ or
$q=\infty$).\par Among the
properties of \modsp s, we
record that they are Banach
spaces whose  definition is
independent of the choice of
the window $g\in\cS(\rd)$,
$\mathcal{M}^{2,2}=L^2$,
$\mathcal{M}^{p_1,q_1}_s\hookrightarrow
\mathcal{M}^{p_2,q_2}_s$, if
$p_1\leq p_2$ and $q_1\leq
q_2$,
$(\mathcal{M}_{s}^{p,q})'=\mathcal{M}_{-s}^{p',q'}$.\par
For a more general
definition, involving
different kinds of weight
functions, both in the time
and the frequency variables
we refer to \cite{grochenig}.

The Definition \ref{def1}
has been accordingly extended to the quasi-Banach case  $0<p,q<1$ in \cite{koba1,koba2}. Here the window function $g$ is to be restricted to the class
$$\left\{g\in\cS(\rd)\,:\,\mbox{supp} \,\hat{g}\subset\{\o\,:\,|\o|\leq 1\},\,\,\mbox{ and}\,\,\sum_{k\in\zd}\hat{g}(\o-\a k)\equiv 1,\quad\forall\o\in\rd\right\},
$$
for a sufficiently  small $\a>0$, so that the set above is not empty.
For other definitions of modulation spaces for all
 $0<p,q\leq\infty$, we refer to \cite{galperin-samarah,rauhut05,triebel83}.

We also recall the definition
of the so-called Wiener
amalgam spaces
(\cite{benyi3,feichtinger90,rauhut05}).
First of all we denote by
$L^p_s$, $s\in\R$,
$0<p\leq\infty$, the weighted
$L^p$ space of function $f$
in $\R^d$ such that $\l x\r^s
f(x)$ is in $L^p$, with the
obvious quasi-norm (norm if
$p\geq1$). Then,
 for $s,\gamma\in\R$, $1\leq p,q<\infty$,
  a tempered distribution $f$ is in the Wiener
  amalgam space $W(\cF L^q_s, L^p_\gamma)(\rd)$ if $f$ is locally in $\cF L^q_s(\rd)$, that is, for every non-null $g\in\cC^\infty_0(\rd)$,
$\cF(fT_xg)\in  L^q_s(\rd)$
and
\begin{equation}
\label{asterisco}
\|f\|_{W(\cF
L^q_s,
L^p_\gamma)}=\left(\intrd
\left(\intrd |\cF
(fT_xg)(y)|^q\,\la
y\ra^{sq}dy\right)^{p/q}\la
x\ra^{\gamma
p}dx\right)^p<\infty.
\end{equation}
When $p=\infty$ or
$q=\infty$, we define $W(\cF
L^q_s, L^p_\gamma)(\rd)$ as
the closure of the Schwartz
space with the norm in
\eqref{asterisco} (modified
in the obvious way). This
definition is independent of
the test function
$g\in\cS(\rd)$. For
properties we refer to
\cite{feichtinger90}. The
definition above is extended
to the cases $0<p,q<1$  in
\cite{benyi3}; here the
window function $g\in\cS$
satisfies ${\rm
supp}\,\hat{g}\subset
\{|\o|\leq 1\}$, see also
\cite{rauhut05} for the
quasi-Banach case in the
global component $L^q$.\par
We now briefly present our
wellposedness results (see
the statements of Theorems
\ref{T1}, \ref{T2} and
\ref{T3} for details).
Consider first a nonlinearity
$F(u)$, where $F$ is an
entire real-analytic
function. Then we will prove
that \eqref{cpw} is wellposed
in $\cM^{p,1}_s(\R^d)$ for
every $1\leq p\leq\infty$,
$s\geq0$. In particular, for
every $(u_0,u_1)\in
\cM^{p,1}_s(\R^d)\times
\cM^{p,1}_{s-1}(\R^d)$ there
exists $T>0$ and a unique
solution $u\in
\cC([0,T];\cM^{p,1}_s(\R^d))$
to \eqref{solop}. Similarly,
we shall  show that
\eqref{cpw} is also wellposed
in $W(\Fur L^1_s,L^p_\gamma)$
for every $1\leq
p\leq\infty$,
$s,\gamma\geq0$.\\ We then
consider the case of the
power nonlinearity
$F(u)=\lambda|u|^{2k}u$. We
show that \eqref{cpw} is
wellposed in
$\cM^{p,q}_s(\R^d)$ for every
$1\leq p\leq\infty$,
$q'>2kd$, $s\geq0$. This
refines a result in
\cite{benyi3}, where the
authors assume $q'=\infty$
(and $u_1$ with the same
regularity $s$ as $u_0$,
rather than $s-1$). For the
same nonlinearity we also
prove wellposedness in
$W(\Fur L^q_s,L^p_\gamma)$,
$1\leq p\leq\infty$,
$q'>2kd$, $s,\gamma\geq0$.\\
The most interesting cases
are of course when
$s=\gamma=0$, $p=\infty$, and
$q$ is as large as possible.
In this connection we notice
that $\cM^{\infty,q}_s\subset
W(\Fur L^q_s, L^\infty)$, so
Wiener amalgam spaces allow
us to consider more general
initial data than those in
the above modulation spaces.

Let us highlight that the
same arguments apply to the
study of the Cauchy problem
for the nonlinear
Klein-Gordon equation (NLKG)
(that is \eqref{cpw}  with
the operator $-\Delta_x$
replaced by  $I-\Delta_x$).
We omit details.

The basic tool of the proof
is given by Fourier
multiplier estimates on
\modsp s.
 Precisely, we will use (the first part
 of) the
 following refinement
 of results in
 \cite{benyi,benyi3},
  involving multipliers with
   symbols in the Wiener amalgam spaces
   $W(\cF L^p_s, L^q_\gamma)$.
\begin{proposition} \label{L1} Let $s,t\in\R$, $0<q\leq\infty$. Let $\sigma$ be a function on $\rd$ and consider the
Fourier multiplier operator defined in \eqref{FM}.\\
(i)  If $1\leq p\leq \infty,$ $\sigma\in W(\cF L^1,
 L^\infty_{\gamma})$, then the operator $H_\sigma$
extends to  a bounded
operator from
$\mathcal{M}^{p,q}_s$
into
$\mathcal{M}^{p,q}_{s+\gamma}$,
with
\begin{equation}\label{fumultest}
    \|H_\sigma f\|_{\mathcal{M}^{p,q}_{s+\gamma}}
    \lesssim \|\sigma\|_{W(\cF L^1,
    L^\infty_{\gamma})}\|f\|_{\mathcal{M}^{p,q}_{s}}.
\end{equation}
(ii) If $0< p<1$, $\sigma\in
W(\cF L^p,
L^\infty_{\gamma})$, then the
operator $H_\sigma$ is a
bounded operator from
$M^{p,q}_s$ into
$\mathcal{M}^{p,q}_{s+\gamma}$,
with
\begin{equation}\label{fumultest2}
    \|H_\sigma f\|_{\mathcal{M}^{p,q}_{s+\gamma}}
    \lesssim \|\sigma\|_{W(\cF L^p,
    L^\infty_{\gamma})}\|f\|_{\mathcal{M}^{p,q}_{s}}.
\end{equation}
\end{proposition}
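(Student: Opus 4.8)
The plan is to reduce the mapping property of $H_\sigma$ to a one-line estimate on each block of a frequency-uniform decomposition of $f$. Fix a smooth partition of unity $\{\psi_k\}_{k\in\zd}$ subordinate to the cubes $Q_k=k+[0,1]^d$, with $\sum_k\psi_k\equiv 1$, $\mathrm{supp}\,\psi_k$ contained in a fixed dilate $\tilde Q_k$ of $Q_k$, and choose $\tilde\psi_k$ equal to $1$ on $\mathrm{supp}\,\psi_k$ and supported in $\tilde Q_k$. Put $\Box_k=\cF^{-1}\psi_k\cF$. I would use the standard equivalent norm
\begin{equation*}
\|f\|_{\mathcal{M}^{p,q}_s}\asymp\Big(\sum_{k\in\zd}\langle k\rangle^{sq}\,\|\Box_k f\|_{L^p}^q\Big)^{1/q},
\end{equation*}
valid for all $0<p,q\le\infty$ (in the quasi-Banach range this is part of the theory of \cite{koba1,koba2} and is exactly why the windows there carry a compact spectrum). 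Since $\widehat{H_\sigma f}=\sigma\hat f$ and $\psi_k=\tilde\psi_k\psi_k$, the Fourier side factorizes as $\psi_k\sigma\hat f=(\sigma\tilde\psi_k)(\psi_k\hat f)$, whence
\begin{equation*}
\Box_k H_\sigma f=\cF^{-1}(\sigma\tilde\psi_k)*\Box_k f .
\end{equation*}
The proof then rests on two facts: the local Wiener amalgam norm controls the localized multiplier with the correct weight, $\langle k\rangle^{\gamma}\|\sigma\tilde\psi_k\|_{\cF L^r}\lesssim\|\sigma\|_{W(\cF L^r,L^\infty_\gamma)}$ (here $\tilde\psi_k$ localizes $\sigma$ in its own variable, which is precisely the variable localized in the Wiener amalgam norm), and a convolution estimate for the displayed block.

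For part (i), with $1\le p\le\infty$, the block convolution is controlled by the ordinary Young inequality $L^1*L^p\hookrightarrow L^p$:
\begin{equation*}
\|\Box_k H_\sigma f\|_{L^p}\le\|\cF^{-1}(\sigma\tilde\psi_k)\|_{L^1}\|\Box_k f\|_{L^p}=\|\sigma\tilde\psi_k\|_{\cF L^1}\|\Box_k f\|_{L^p}\lesssim\langle k\rangle^{-\gamma}\|\sigma\|_{W(\cF L^1,L^\infty_\gamma)}\|\Box_k f\|_{L^p}.
\end{equation*}
Multiplying by $\langle k\rangle^{s+\gamma}$, the gained factor $\langle k\rangle^{-\gamma}$ cancels the extra weight exactly, and summing the $q$-th powers over $k$ and invoking the equivalent norm in both $\mathcal{M}^{p,q}_{s+\gamma}$ and $\mathcal{M}^{p,q}_s$ yields \eqref{fumultest}. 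Note that no band-limiting of $\Box_k f$ is needed in this range.

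The delicate point, and the main obstacle, is part (ii), $0<p<1$, where Young's inequality $L^1*L^p\hookrightarrow L^p$ fails (convolution is not even bounded on $L^p$ for $p<1$). Here I would exploit that both factors $\cF^{-1}(\sigma\tilde\psi_k)$ and $\Box_k f$ have Fourier transform supported in the fixed-size cube $\tilde Q_k$, and invoke the convolution inequality for band-limited functions: for $0<p<1$ there is $C>0$, independent of $k$, with $\|h_1*h_2\|_{L^p}\le C\,\|h_1\|_{L^p}\|h_2\|_{L^p}$ whenever $\mathrm{supp}\,\hat h_1,\mathrm{supp}\,\hat h_2\subset\tilde Q_k$. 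Uniformity in $k$ follows because the $\tilde Q_k$ are translates of a single cube and convolution commutes with modulation, which preserves every $L^p$ quasi-norm; the inequality itself is the Plancherel--P\'olya--Nikolskii mechanism for band-limited $L^p$ functions with $p<1$, and establishing it with the size of the spectral support fixed is the technical heart of the matter. Granting it, $\|\Box_k H_\sigma f\|_{L^p}\le C\|\sigma\tilde\psi_k\|_{\cF L^p}\|\Box_k f\|_{L^p}\lesssim\langle k\rangle^{-\gamma}\|\sigma\|_{W(\cF L^p,L^\infty_\gamma)}\|\Box_k f\|_{L^p}$, and the same weighted summation over $k$ as in part (i) gives \eqref{fumultest2}.
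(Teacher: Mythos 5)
Your proof is correct, and it rests on the same two pillars as the paper's argument --- unit-scale frequency localization of both $\sigma$ and $f$, then Young's inequality for $1\le p\le\infty$ and the band-limited convolution inequality $\|h_1*h_2\|_{L^p}\le C\|h_1\|_{L^p}\|h_2\|_{L^p}$ for $0<p<1$ --- but the implementation differs. The paper never discretizes: it writes $\|f\|_{\cM^{p,q}_s}=\bigl(\int_{\rd}\|f*M_\omega g\|_{L^p}^q\langle\omega\rangle^{sq}\,d\omega\bigr)^{1/q}$, picks the window as a convolution square $g_1=g_0*g_0$, and uses $M_\omega g_1=M_\omega g_0*M_\omega g_0$ to factor $(H_\sigma f)*M_\omega g_1=\bigl(\cF^{-1}\sigma*M_\omega g_0\bigr)*\bigl(f*M_\omega g_0\bigr)$; Young's inequality and extraction of $\sup_{\omega}\|\cF^{-1}\sigma*M_\omega g_0\|_{L^1}\langle\omega\rangle^{\gamma}=\|\sigma\|_{W(\cF L^1,L^\infty_\gamma)}$ finish part (i) in four lines. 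Your route through the frequency-uniform decomposition $\Box_k$ is equivalent in spirit but front-loads a nontrivial input, namely the equivalence of the discrete norm $\bigl(\sum_k\langle k\rangle^{sq}\|\Box_k f\|_{L^p}^q\bigr)^{1/q}$ with the STFT definition for all $0<p,q\le\infty$, which the window-splitting trick bypasses. For part (ii) the paper gives no details at all (it defers to B\'enyi--Okoudjou), so your sketch is the more informative one, and the Plancherel--P\'olya--Nikolskii mechanism you invoke is exactly the right engine; note, though, that for $p<1$ the paper defines $W(\cF L^p,L^\infty_\gamma)$ with a band-limited window, while your $\tilde\psi_k$ is compactly supported, so the inequality $\langle k\rangle^{\gamma}\|\sigma\tilde\psi_k\|_{\cF L^p}\lesssim\|\sigma\|_{W(\cF L^p,L^\infty_\gamma)}$ (and the replacement of $\sup_{x\in\rd}$ by $\sup_{k\in\zd}$) requires the change-of-window lemma in the quasi-Banach local component --- standard, but worth a citation rather than a shrug.
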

Similar estimates are proved
for multipliers acting on
Wiener amalgam spaces as
well.

\section{Preliminary results} In
this section we collected
some preliminary results.
\par
This first lemma lemma is
proved in \cite[Corollary
4.2]{baoxiang}. The case
$p,p_i\geq 1$ was first
proved by Feichtinger
\cite{F1}. It is also proved
in \cite{benyi3} using the
theory of multi-linear
pseudodifferential operators.
\begin{lemma} \label{L2} Let $s\geq 0$,
 $0<p\leq p_i\leq\infty$, $1\leq r,q_i\leq\infty$, $N\in\N$, satisfy
\begin{equation}\label{indices}
\sum_{i=1}^N \frac
1{p_i}=\frac1p,\quad
\sum_{i=1}^N \frac
1{q_i}=N-1+\frac1r,
                \end{equation}
                then we have
$$
\| \prod_{i=1}^N
u_i\|_{M^{p,r}_s}\leq
\prod_{i=1}^N \|
u_i\|_{M^{p_i,q_i}_s}.
$$
                \end{lemma}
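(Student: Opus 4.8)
The plan is to reduce the whole estimate to two elementary facts: the short-time Fourier transform turns a product into a convolution in the frequency variable, while it leaves the time variable as a pointwise product. Fix nonzero windows $g_1,\dots,g_N\in\cS(\rd)$ and compute every modulation norm with the product window $g=\prod_{i=1}^N g_i$; this is legitimate because the $M^{p,q}_s$-norm is independent of the (admissible) window. Since $V_gf(x,\cdot)=\cF\big(f\,\overline{T_xg}\big)$ and $\overline{T_xg}=\prod_i\overline{T_xg_i}$, the exchange formula $\cF\big(\prod_i h_i\big)=\cF h_1*\cdots*\cF h_N$ gives, by induction on $N$,
\[
V_g\Big(\prod_{i=1}^N u_i\Big)(x,\cdot)=\big(V_{g_1}u_1(x,\cdot)*\cdots*V_{g_N}u_N(x,\cdot)\big).
\]

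Write $F=\prod_i u_i$ and $W_i(x,\eta)=\langle\eta\rangle^s\,|V_{g_i}u_i(x,\eta)|$. Because $s\geq0$, the weight is submultiplicative, $\langle\omega\rangle^s\lesssim\prod_i\langle\eta_i\rangle^s$ whenever $\sum_i\eta_i=\omega$, so I can distribute the weight across the convolution and obtain the pointwise bound
\[
\langle\omega\rangle^s\,|V_gF(x,\omega)|\lesssim\big(W_1(x,\cdot)*\cdots*W_N(x,\cdot)\big)(\omega).
\]
Taking the $L^p$-norm in $x$ I would apply the generalized H\"older inequality in the time variable, which holds for all $0<p\leq\infty$ under $\sum_i 1/p_i=1/p$, together with Minkowski's integral inequality to pull $\|\cdot\|_{L^p_x}$ through the $\eta$-integrations; setting $A_i(\eta):=\|W_i(\cdot,\eta)\|_{L^{p_i}_x}$ this yields
\[
\big\|\langle\omega\rangle^s V_gF(\cdot,\omega)\big\|_{L^p_x}\lesssim\big(A_1*\cdots*A_N\big)(\omega).
\]
Finally, taking the $L^r$-norm in $\omega$ and invoking the $N$-fold Young convolution inequality, whose exponent condition $\sum_i 1/q_i=N-1+1/r$ is exactly the second hypothesis, gives
\[
\|F\|_{M^{p,r}_s}\lesssim\prod_{i=1}^N\|A_i\|_{L^{q_i}_\omega}=\prod_{i=1}^N\|u_i\|_{M^{p_i,q_i}_s},
\]
which is the asserted inequality (the multiplicative constant can be normalized away by rescaling the windows). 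Notice how the two index conditions encode precisely the H\"older exponents in time and the Young exponents in frequency.

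The one point where the argument above uses $p\geq1$ is Minkowski's integral inequality, and this is exactly where the main obstacle lies: for $0<p<1$ the $L^p_x$-quasinorm cannot be moved inside the $\eta$-integral. I would circumvent this by passing to the discrete Gabor-coefficient description of $M^{p,q}_s$, using the band-limited admissible windows singled out in the Introduction for the quasi-Banach range. In that description the frequency convolution above becomes a genuine discrete convolution of the coefficient sequences, and no Minkowski step is needed: discrete H\"older in the time index (valid for every $0<p\leq\infty$) and discrete Young in the frequency index (valid since $1\leq q_i\leq\infty$) close the estimate directly. The delicate part is to verify that the product structure, the band-limited window, and the lattice discretization remain compatible, so that the clean convolution identity of the first paragraph survives at the level of Gabor coefficients.
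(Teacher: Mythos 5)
First, a remark on the comparison: the paper does not actually prove this lemma, it quotes it from \cite[Corollary 4.2]{baoxiang}, attributing the Banach case $p,p_i\geq 1$ to Feichtinger \cite{F1} and mentioning an alternative proof via multilinear pseudodifferential operators in \cite{benyi3}. Your first paragraph reconstructs the classical Feichtinger argument correctly: with a product window the STFT of $\prod_i u_i$ becomes the $N$-fold frequency convolution of the $V_{g_i}u_i$, the weight $\langle\cdot\rangle^s$ distributes over the convolution since $s\geq0$, and Minkowski plus generalized H\"older in $x$ followed by $N$-fold Young in $\omega$ produce exactly the two index conditions in \eqref{indices}. (The inequality with constant $1$ is of course only true up to a window-dependent factor $C^N$; this is harmless in the paper's applications and your remark about rescaling is the right spirit.) For $1\leq p\leq\infty$ this is a complete and correct proof.

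The genuine gap is in the quasi-Banach range $0<p<1$, which is precisely the part that forces the citation to \cite{baoxiang}. You correctly identify that Minkowski's integral inequality fails there, but the discrete substitute you sketch does not close as stated. After discretizing in frequency by projections $\Box_k$ localized near $k\in\zd$, the estimate one needs is
\[
\Big\|\Box_k\Big(\prod_{i=1}^N \Box_{k_i}u_i\Big)\Big\|_{L^p}\lesssim \prod_{i=1}^N\big\|\Box_{k_i}u_i\big\|_{L^{p_i}},
\]
summed over $k_1+\dots+k_N=k+O(1)$, before discrete Young in $k$ can be invoked. Discrete H\"older handles the product, but the outer projection $\Box_k$ is a convolution operator, and convolution is \emph{not} bounded on $L^p$ for $p<1$ in general; it is bounded here only because its argument is band-limited to a ball of radius $O(N)$, via a Nikolskii/Plancherel--Polya type inequality for band-limited functions (with a constant depending on the band size). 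This is the actual engine of the quasi-Banach proof in \cite{baoxiang}, and it is exactly the step you defer as ``the delicate part.'' Two further wrinkles: with genuine Gabor coefficients the product of two Gabor atoms is not a Gabor atom, so the clean convolution identity does not literally survive at the coefficient level --- the frequency-uniform decomposition is the formulation in which it does; and for $p<1$ the independence of the norm from the window (needed to justify the product window or the passage to a discrete description) is itself nontrivial and restricted to the band-limited window class singled out in the paper's definition. So your argument is complete for $p\geq1$ but only a program, with the key inequality unproved, for $0<p<1$.
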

 In particular, for
$p_i=N p$, $q_i=q$,
$i=1,\dots N$, we get
\begin{equation}\label{norms}
\|\prod_{i=1}^N
u_i\|_{M^{p,r}_s}\leq
\prod_{i=1}^N\|
u_i\|_{M^{p,q}_s},\quad
\frac{N}{q}=N-1+\frac1r.
\end{equation}
\begin{lemma}
Let $0<p_i,p\leq\infty$,
$1\leq r,q_i\leq\infty$,
$N\in\mathbb{N}$ satisfy
\eqref{indices}, and
$s\geq0$,
$\gamma=\sum_{i=1}^N\gamma_i$,
$\gamma_i\in\R$. We have
\[
\|\prod_{i=1}^N u_i\|_{W(\Fur
L^{r}_s,L^p_\gamma)}\leq\prod_{i=1}^N
\|u_i\|_{W(\Fur
L^{q_i}_s,L^{p_i}_{\gamma_i})}.
\]
\end{lemma}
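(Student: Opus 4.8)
The plan is to mirror the proof of Lemma \ref{L2}, using the fact that the Wiener amalgam norm decouples into a local (frequency-side) component $\Fur L^r_s$, governed by convolution, and a global (space-side) component $L^p_\gamma$, governed by pointwise multiplication. Accordingly I would control the former by a weighted Young inequality and the latter by Hölder's inequality, the two index conditions in \eqref{indices} being precisely what these two inequalities demand.

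First I would exploit the window-independence of the Wiener amalgam norm to choose $g=\psi^N$ with $\psi\in\cC^\infty_0(\rd)$, so that $T_x g=(T_x\psi)^N$ and hence
\[
\Big(\prod_{i=1}^N u_i\Big)T_x g=\prod_{i=1}^N\big(u_i\,T_x\psi\big).
\]
Applying $\Fur$ converts this pointwise product into an $N$-fold convolution,
\[
\Fur\Big(\big(\textstyle\prod_{i=1}^N u_i\big)T_x g\Big)=\Fur(u_1 T_x\psi)\ast\cdots\ast\Fur(u_N T_x\psi).
\]
Since $s\geq0$, the weight is submultiplicative, $\la y_1+\dots+y_N\ra^s\leq\prod_{i=1}^N\la y_i\ra^s$, so I can absorb $\la\cdot\ra^s$ into each factor and reduce to the ordinary $N$-fold Young inequality. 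With the constraint $\sum_{i=1}^N 1/q_i=N-1+1/r$ this gives, for each fixed $x$,
\[
\Big\|\Fur\Big(\big(\textstyle\prod_i u_i\big)T_x g\Big)\Big\|_{L^r_s}\leq\prod_{i=1}^N\big\|\Fur(u_i T_x\psi)\big\|_{L^{q_i}_s}=:\prod_{i=1}^N F_i(x).
\]

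It then remains to take the outer $L^p_\gamma$ norm in $x$ of the product $\prod_i F_i$. Writing $\la x\ra^\gamma=\prod_{i=1}^N\la x\ra^{\gamma_i}$, which is legitimate because $\gamma=\sum_i\gamma_i$, and applying the generalized Hölder inequality with the exponents $p_i$ satisfying $\sum_{i=1}^N 1/p_i=1/p$ (valid throughout the quasi-Banach range $0<p,p_i\leq\infty$), I obtain
\[
\Big\|\prod_{i=1}^N F_i\Big\|_{L^p_\gamma}\leq\prod_{i=1}^N\|F_i\|_{L^{p_i}_{\gamma_i}}=\prod_{i=1}^N\|u_i\|_{W(\Fur L^{q_i}_s,L^{p_i}_{\gamma_i})},
\]
which is the desired estimate.

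The delicate points are confined to the first step. The factorization $g=\psi^N$ enlarges $\supp\hat g$, so to remain within the admissible class of windows in the quasi-Banach regime one must rescale $\psi$ so that $\supp\hat\psi$ is small, and then invoke window-independence of the norm; keeping the constant equal to $1$ requires using the sharp (constant-one) forms of submultiplicativity, of Young's inequality, and of Hölder's inequality, together with a consistent choice of windows on the two sides of the estimate. The analytical content beyond this is routine.
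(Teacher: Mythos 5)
Your proof is correct and follows essentially the same route as the paper's: choose a product window $g=\prod_i g_i$, turn the pointwise product into an $N$-fold convolution on the Fourier side, apply weighted Young's inequality using submultiplicativity of $\la\cdot\ra^s$ for $s\geq 0$, and finish with H\"older's inequality in the outer variable after splitting $\la x\ra^\gamma=\prod_i\la x\ra^{\gamma_i}$. The only difference is cosmetic (the paper allows distinct factors $g_i$ rather than $g=\psi^N$), and your remark about window-independence affecting the constant applies equally to the paper's version.
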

In particular, for $p_i=N p$,
$q_i=q$,
$\gamma_i=\gamma/N\geq0$,
$i=1,\dots N$, we get
\begin{equation}\label{agt}
\|\prod_{i=1}^N u_i\|_{W(\Fur
L^{r}_s,L^p_\gamma)}\leq\prod_{i=1}^N
\|u_i\|_{W(\Fur
L^{q}_s,L^{p}_{\gamma})},\quad
\frac{N}{q}=N-1+\frac1r.
\end{equation}
\begin{proof}
We choose a window function
$g$ of the type
$g(x)=\prod_{i=1}^n g_i(x)$,
with
$g_i\in\cC^\infty_0(\R^d)\setminus\{0\}$,
$i=1,\ldots,N$. Then $M_{-x}
\hat{g}=M_{-x}\hat{g}_1\ast\ldots
\ast M_{-x}\hat{g}_N$, so
that
\[
\Fur\left(\prod_{i=1}^N u_j
T_x g\right)=(\hat{u}_1\ast
M_{-x}\hat{g}_1)\ast\ldots\ast
(\hat{u}_N\ast
M_{-x}\hat{g}_N).
\]
Since $s\geq0$, a repeated
application of the inequality
$\langle\xi\rangle^s\leq\l
\eta\r^s\l \xi-\eta\r^s$ and
Young inequality then give
\[
\|\prod_{i=1}^N u_i T_x
g\|_{\Fur L^r_s}\leq
\prod_{i=1}^N \|u_i T_x
g\|_{\Fur L^{q_j}_s}.
\]
Now we multiply this
inequality by $\langle
x\rangle^\gamma$ and conclude
by an application of
H\"older's inequality.
\end{proof}
\begin{lemma}
For $0<p,q\leq\infty$,
$s\in\mathbb{R}$ we have
\begin{equation}\label{incl}
W(\Fur
L^r_{s},L^p_\gamma)\hookrightarrow
W(\Fur
L^q_{s-1},L^p_\gamma),\quad
\frac{d}{q}-\frac{d}{r}<1.
\end{equation}
\end{lemma}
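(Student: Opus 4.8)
The plan is to reduce the claimed embedding to a single \emph{local} estimate and then exploit the band-limited structure of the windowed pieces. Since the outer integration in the Wiener amalgam norm \eqref{asterisco} is taken in the same space $L^p_\gamma(dx)$ on both sides, and since the norm is independent of the window $g$, it suffices to fix one convenient window and to prove a pointwise-in-$x$ inequality
$$
\|\cF(fT_xg)\|_{L^q_{s-1}}\lesssim \|\cF(fT_xg)\|_{L^r_s},
$$
with a constant independent of $x$; taking the $L^p_\gamma(dx)$ quasi-norm of both sides and using its monotonicity then yields \eqref{incl}. I would choose $g\in\cC^\infty_0(\rd)$, so that $\phi:=fT_xg$ is supported in a ball of fixed radius, uniformly in $x$, and hence $h:=\cF(\phi)$ is band-limited with spectrum contained in a fixed ball. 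Writing out the target, the whole matter is the inequality $\|\la\cdot\ra^{s-1}h\|_{L^q}\lesssim\|\la\cdot\ra^{s}h\|_{L^r}$.

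I would then split into two regimes according to the sign of $1/q-1/r$. When $q\le r$, I would simply apply H\"older's inequality, writing $\la y\ra^{s-1}=\la y\ra^{-1}\la y\ra^{s}$ and choosing the exponent $t$ by $1/q=1/t+1/r$ (legitimate in the generalized form for all $0<q\le\infty$, since $t\ge q$ and $r\ge q$). The factor $\la\cdot\ra^{-1}$ belongs to $L^t(\rd)$ exactly when $t>d$, i.e. when $1/q-1/r<1/d$, which is precisely the hypothesis $d/q-d/r<1$; this gives the estimate in this regime and shows the condition is the sharp one here. When $q>r$ the condition is automatically satisfied, and H\"older is useless; here I would invoke the band-limitedness of $h$. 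Fixing a Schwartz function $\Phi$ with $\widehat{\Phi}\equiv1$ on the (fixed) spectrum of $h$, one has the reproducing identity $h=h\ast\Phi$. Since $\la\cdot\ra^{s}$ is moderate, $\la y\ra^{s}\le \la y-z\ra^{s}\la z\ra^{|s|}$, so that
$$
\la y\ra^{s-1}|h(y)|\le \big(\big(\la\cdot\ra^{s}|h|\big)\ast\big(\la\cdot\ra^{|s|}|\Phi|\big)\big)(y),
$$
and a convolution (Young-type) estimate with the rapidly decaying kernel $\la\cdot\ra^{|s|}|\Phi|$ upgrades the $L^r$ norm to the $L^q$ norm, again uniformly in $x$.

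The step I expect to be the main obstacle is the regime $q>r$, for two reasons. First, one must genuinely use that $h$ is band-limited: for arbitrary functions $\cF L^r_s$ does not embed into $\cF L^q_{s-1}$ when $q>r$, so the compact support of $fT_xg$ (equivalently, the Nikolskii--Bernstein phenomenon for functions with fixed spectrum) is essential. Second, in the quasi-Banach range $0<r<1$ the classical Young inequality is unavailable, and the convolution estimate above has to be replaced by its band-limited counterpart, valid for all $0<r\le q\le\infty$; this is where the choice of a compactly supported window, and the consequent uniform spectral localization of $h$, really pays off. The remaining bookkeeping --- checking that all constants depend only on $g,d,s$ and the exponents (not on $x$), and that the outer $L^p_\gamma$ quasi-norm is handled by monotonicity --- is routine.
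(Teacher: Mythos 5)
Your argument is correct, and in the regime that matters it coincides with the paper's: the paper's entire proof is the two sentences ``An application of H\"older's inequality shows that $L^r_{s}\hookrightarrow L^q_{s-1}$; hence the embedding follows from the definition of Wiener amalgam spaces,'' which is exactly your reduction to the uniform-in-$x$ local estimate followed by H\"older with $1/q=1/t+1/r$ and the observation that $\la\cdot\ra^{-1}\in L^t$ iff $t>d$, i.e.\ iff $d/q-d/r<1$. Where you go beyond the paper is the case $q>r$: there the global Lebesgue embedding $L^r_s\hookrightarrow L^q_{s-1}$ invoked by the paper is simply false (test on $|y|^{-a}\chi_{|y|\le 1}$ with $d/q<a<d/r$), so the paper's one-line argument does not literally cover that range even though the hypothesis $d/q-d/r<1$ is vacuous there; you correctly recognize that one must instead use that $\cF(fT_xg)$ has compactly supported inverse Fourier transform and apply a weighted Nikolskii--Plancherel--Polya inequality, valid for all $0<r\le q\le\infty$. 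One small point to tidy up in that step: the spectrum of $\cF(fT_xg)$ is $B(x,R)$, not a fixed ball, but since all the norms involved depend only on $|\cF(fT_xg)|=|\cF((T_{-x}f)\,g)|$ you may translate to a fixed spectrum $B(0,R)$, so the reproducing kernel $\Phi$ and all constants are indeed independent of $x$. Note finally that in the paper's application (Theorems 4.3 and 4.5) one always has $r\ge q$, so the H\"older regime is the only one actually used; your extra work repairs the statement in full generality rather than changing the proof of what is needed.
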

\begin{proof}
An application of H\"older's
inequality shows that
$L^r_{s}\hookrightarrow
L^q_{s-1}$. Hence the desired
embedding follows directly
from the definition of Wiener
amalgam spaces.\\ $ $
\end{proof}

In the sequel the following convolution relations will be useful \cite{F1}.
\begin{lemma}
  For $i=1,2,3$, let $B_i$ be one of the Banach spaces
  $\cF L^q_s$ ($1\leq q\leq\infty$,
   $s\in\R$),  $C_i$ be one of the Banach spaces
  $L^p_\gamma$ ($1\leq p\leq\infty$, $\gamma\in\R$).  If $B_1\ast B_2\hookrightarrow B_3$ and $C_1\ast
  C_2\hookrightarrow C_3$, we have
  \begin{equation}\label{conv0}
  W(B_1,C_1)\ast W(B_2,C_2)\hookrightarrow W(B_3,C_3).
  \end{equation}
\end{lemma}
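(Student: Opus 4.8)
The plan is to prove the convolution relation for Wiener amalgam spaces by reducing it to the two one-dimensional convolution hypotheses on the local and global components separately. Recall that a Wiener amalgam space $W(B,C)$ consists of distributions whose local behaviour is controlled by $B$ (through $\cF(fT_xg)$) and whose global behaviour is controlled by $C$ (through the map $x\mapsto \|fT_xg\|_{B}$). The key structural fact I would exploit is that convolution interacts with the local and global components in a decoupled way: convolving two functions corresponds, roughly, to convolving their local pieces and convolving their global envelopes.

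First I would fix a window $g\in\cC^\infty_0(\rd)\setminus\{0\}$ and, following the standard trick, choose it so that it factors conveniently, e.g. pick $g=g_1\ast g_2$ (or arrange that $T_xg$ can be written via windows adapted to each factor). The point is to estimate $\cF\big((f_1\ast f_2)T_xg\big)$ in $B_3$ in terms of the local pieces of $f_1$ and $f_2$. Writing the convolution $f_1\ast f_2$ and inserting a partition-type localization, one obtains a pointwise (in $x$) bound of the form
\[
\big\|\cF\big((f_1\ast f_2)T_xg\big)\big\|_{B_3}\lesssim \Big(\Phi_1\ast\Phi_2\Big)(x),
\]
where $\Phi_i(x)=\|f_iT_xg_i\|_{B_i}$ are the global envelopes of the two factors and the inner convolution $B_1\ast B_2\hookrightarrow B_3$ is used to pass from the two local norms to the $B_3$ norm. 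This is the heart of the matter: it says that the local component of the convolution is controlled by the convolution of the local components, precisely the hypothesis $B_1\ast B_2\hookrightarrow B_3$.

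Having reduced to the envelope inequality, I would then take the $C_3$-quasinorm in the $x$ variable of both sides. By the displayed pointwise bound and monotonicity of the $C_3$ quasinorm,
\[
\|f_1\ast f_2\|_{W(B_3,C_3)}\lesssim \|\Phi_1\ast\Phi_2\|_{C_3}\lesssim \|\Phi_1\|_{C_1}\|\Phi_2\|_{C_2}=\|f_1\|_{W(B_1,C_1)}\|f_2\|_{W(B_2,C_2)},
\]
where the middle step is exactly the second hypothesis $C_1\ast C_2\hookrightarrow C_3$. Since the $C_i$ here are weighted $L^{p}_\gamma$ spaces and the $B_i$ are $\cF L^q_s$ spaces, both convolution embeddings are instances of weighted Young-type inequalities, so all the constants are finite and independent of $x$.

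The main obstacle I anticipate is making the localization step rigorous: the informal ``pointwise in $x$'' envelope bound requires controlling the interaction between the shifted window $T_xg$ and the genuinely bilinear structure of $f_1\ast f_2$, i.e.\ distributing the single window $g$ across the two factors so that each factor is paired with its own window. This is handled by the window-factorization choice together with the fact that the definition of $W(B,C)$ is independent of the (admissible) window, which lets me replace $g$ by a product-type window without changing the space. Once the envelope inequality is established, everything else is a direct application of the two assumed convolution embeddings and the quasinorm estimates, so the remaining computations are routine.
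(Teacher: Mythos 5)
The paper does not actually prove this lemma; it is quoted from Feichtinger \cite{F1} without argument. Your plan is the standard proof of the amalgam convolution relation and it is correct in structure: reduce to an envelope inequality for the local component, then apply the global hypothesis $C_1\ast C_2\hookrightarrow C_3$. The one step you leave schematic --- ``distributing the single window across the two factors'' --- is exactly where the argument must be pinned down, and it is done by the identity
\begin{equation*}
(f_1\ast f_2)\cdot T_x(g_1\ast g_2)\;=\;\int_{\rd}\bigl[(f_1\,T_yg_1)\ast(f_2\,T_{x-y}g_2)\bigr]\,dy ,
\end{equation*}
which one checks by Fubini and the change of variables $v=u-y$ in the double integral. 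Taking the $B_3$-norm, using Minkowski's integral inequality and $B_1\ast B_2\hookrightarrow B_3$ gives precisely your pointwise bound $\|(f_1\ast f_2)T_xg\|_{B_3}\lesssim(\Phi_1\ast\Phi_2)(x)$ with $\Phi_i(y)=\|f_iT_yg_i\|_{B_i}$, and then $C_1\ast C_2\hookrightarrow C_3$ finishes the proof; the window $g=g_1\ast g_2$ is admissible by window-independence of the $W(B,C)$-norm, as you say. Two small caveats: Minkowski's integral inequality is available here because the $B_i$ are genuine Banach spaces ($q\geq 1$), which is why the lemma is stated only in that range; and the hypotheses $B_1\ast B_2\hookrightarrow B_3$, $C_1\ast C_2\hookrightarrow C_3$ for the concrete scales $\cF L^q_s$ and $L^p_\gamma$ are not automatic but are assumed in the statement (they amount to H\"older on the Fourier side and weighted Young, respectively, under the appropriate index and weight conditions). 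With the displayed identity inserted, your argument is complete and coincides with the classical one.
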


\section{Multiplier estimates}

In this section we first
prove estimates in
$M^{p,q}_s$ for the
multipliers arising in the
wave propagator, refining
those in \cite{benyi,benyi3}.
Then we prove estimates in
the Wiener amalgam spaces
$W(\Fur
L^p_s,L^q_\gamma)$. We begin with the proof of
Proposition \ref{L1}.

\begin{proof}[Proof of Proposition \ref{L1}] For the case $1\leq p\leq \infty$, the arguments are a rearrangement of
\cite[Lemma 8]{benyi},
whereas the case $0<p<1$ one
argues as in \cite[Lemma
1]{benyi2}. For the sake of
clarity, we shall detail the
first case. Choose a window
function $g_1=g_0\ast
g_0\in\cS(\rd)$ and use $M_\o
g_1=M_\o g_0\ast M_\o g_0$,
so that, by Young inequality,
\begin{align*}
   \| H_\sigma f\|_{\mathcal{M}^{p,q}_{s+\gamma}}^q
   &= \intrd \|(H_\sigma f)\ast M_{\o} g_1 \|_p^q\la
   \o\ra^{q(s+\gamma)}\,d\o\\
   &\leq \intrd \|\cF^{-1}(\sigma) \ast M_{\o} g_0
   \|_{L^1}^q \| f \ast  M_{\o} g_0\|_{L^p}^q\,\la \o
   \ra^{q(s+\gamma)}\,d\o\\
   &\leq \left(\sup_{\o\in\rd} \|\cF^{-1}(\sigma)
   \ast M_{\o}
   g_0\|_{L^1}^q\la\o\ra^{q\gamma}\right) \intrd
   \| f \ast  M_{\o} g_0\|_{L^p}^q\,\la \o \ra^{qs}
   \,d\o\\
   &=\|\sigma\|_{W(\cF L^1,
    L^\infty_{\gamma})}^q\|f\|_{\mathcal{M}^{p,q}_{s}}^q,
\end{align*}
as desired.
\end{proof}

We need also the following
elementary result. We give
the proof for the sake of
completeness.
\begin{lemma}\label{L3}
Let  $R>0$ and
$f\in\cC_0^\infty(\rd)$ such
that supp $f\subset
B(y,R):=\{x\in\rd, |x-y|\leq
R\}$, with $y\in\rd$. Then,
for every $0<p\leq \infty$,
there exist an index
$k=k(p)\in \N$ and a constant
$C_{R,p}>0$ (which depends
only on $R$ and $p$)  such
that
\begin{equation}\label{E1}
                \|f\|_{\cF L^p}\leq C_{R,p}
                \sup_{|\a|\leq k} \|\partial^\alpha f\|_{L^\infty}.
\end{equation}
\end{lemma}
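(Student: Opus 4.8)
The plan is to reduce to the centered case $y=0$ and then to establish pointwise decay of $\hat f$ controlled by finitely many sup-norms of derivatives. First I would observe that both sides are invariant under translation of $f$: replacing $f$ by $T_{-y}f$, which is supported in $B(0,R)$, only multiplies $\hat f$ by the unimodular factor $e^{2\pi i y\cdot\xi}$, hence leaves $\|f\|_{\cF L^p}=\|\hat f\|_{L^p}$ unchanged, while $\partial^\alpha(T_{-y}f)=T_{-y}(\partial^\alpha f)$ has the same $L^\infty$ norm as $\partial^\alpha f$. So I may assume $\supp f\subset B(0,R)$. The case $p=\infty$ is then immediate, since $\|\hat f\|_{L^\infty}\leq\|f\|_{L^1}\leq|B(0,R)|\,\|f\|_{L^\infty}$, so $k=0$ works.

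The key step for $p<\infty$ is a decay estimate for $\hat f$. For any multi-index $\alpha$ the identity $\widehat{\partial^\alpha f}(\xi)=(2\pi i\xi)^\alpha\hat f(\xi)$, together with the crude bound $\|\widehat{\partial^\alpha f}\|_{L^\infty}\leq\|\partial^\alpha f\|_{L^1}$ and the support condition, gives
\[
(2\pi)^{|\alpha|}|\xi^\alpha|\,|\hat f(\xi)|\leq\|\partial^\alpha f\|_{L^1}\leq|B(0,R)|\sup_{|\beta|\leq|\alpha|}\|\partial^\beta f\|_{L^\infty}.
\]
Summing these over $|\alpha|\leq 2N$ and using that $\langle\xi\rangle^{2N}$ is a polynomial in $\xi$ with nonnegative coefficients supported on monomials $\xi^\alpha$ with $|\alpha|\leq 2N$, I obtain the pointwise bound
\[
|\hat f(\xi)|\leq C_{N,R}\,\langle\xi\rangle^{-2N}\sup_{|\alpha|\leq 2N}\|\partial^\alpha f\|_{L^\infty},
\]
where $C_{N,R}$ absorbs the factors $(2\pi)^{-|\alpha|}$, the volume $|B(0,R)|$ and the (finitely many) coefficients of $\langle\xi\rangle^{2N}$, and so depends only on $N$, $R$ and $d$.

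Finally I would integrate. Given $0<p<\infty$, choose $N\in\N$ with $2Np>d$, so that $\langle\cdot\rangle^{-2N}\in L^p(\rd)$; raising the previous display to the power $p$ and integrating over $\rd$ yields
\[
\|\hat f\|_{L^p}\leq C_{N,R}\,\|\langle\cdot\rangle^{-2N}\|_{L^p}\sup_{|\alpha|\leq 2N}\|\partial^\alpha f\|_{L^\infty},
\]
which is the claim with $k=k(p)=2N$ and $C_{R,p}=C_{N,R}\|\langle\cdot\rangle^{-2N}\|_{L^p}$, a constant depending only on $R$ and $p$ (and the dimension $d$). There is no serious obstacle here; the only points requiring care are that $k$ must grow as $p\downarrow 0$ in order to make $\langle\cdot\rangle^{-2N}$ integrable to the $p$-th power, which is precisely why the statement permits $k$ to depend on $p$, and that compact support is essential, since it is exactly what converts the $L^1$-norms of the derivatives into $L^\infty$-norms.
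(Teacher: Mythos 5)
Your argument is correct and is essentially the paper's own proof: the paper inserts $\la\o\ra^{-2k}\la\o\ra^{2k}$ and writes $\la\o\ra^{2k}\hat f(\o)$ as the Fourier transform of $(1-\Delta)^k f$, bounds that integral by $\mathrm{vol}(B(y,R))\sup_{|\a|\leq 2k}\|\partial^\a f\|_{L^\infty}$, and integrates using $2kp>d$ --- exactly your monomial-by-monomial version of the same computation. The translation reduction and the separate $p=\infty$ case are harmless extras; no substantive difference.
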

\begin{proof} We know $f\in\cC_0^\infty\subset\cF L^p$.
If we take $k\in\N$ such that
$kp>d/2$, then
\begin{align}
                \|f\|_{\cF L^p}&=\left(\left\|\left(\frac{\la\cdot\ra^{2k}}{\la\cdot\ra^{2k}}|\hat{f}|\right)^p\right\|_{L^1}\right)^{1/p}\\
                &=\left(\intrd \frac1{\la\o\ra^{2kp}}\left|\intrd e^{-2\pi i x \o}(1-\Delta)^k f(x) dx\right|^pd\o\right)^{1/p}\\
                &\leq C_k vol(B(y,R))\sup_{|\a|\leq 2 k}\|\partial^\a f\|_{L^\infty}\leq C_{R,p} \sup_{|\a|\leq 2k}\|\partial^\a f\|_{L^\infty}.
\end{align}
\end{proof}

Now, we consider  the family
of multipliers
$\sigma_{\a,\delta}$, and
$\tau$ defined by
\begin{equation}\label{multsymb}
    \sigma_{\a,\delta}=\frac{\sin |\o|^\a}{|\xi|^\delta},
    \quad
    \delta\leq \a\leq 1,\
    \alpha>0,
\end{equation}
\begin{equation}\label{multsymb2}
\tau(\xi)=\cos|\xi|.
\end{equation}
\begin{proposition}\label{A1} (i) The multipliers
$\sigma_{\a,\delta}$ in
\eqref{multsymb} are in the
space $W(\cF
L^1,L^\infty_\delta)$. (ii)
For $\a=\delta=1$,  we have
$\sigma_{1,1}\in W(\cF
L^p,L^\infty_1)$ for every
$0<p\leq\infty$.\\
(iii) The multiplier $\tau$
in \eqref{multsymb2} is in
$W(\cF L^p,L^\infty)$ for
every $0<p\leq\infty$.
\end{proposition}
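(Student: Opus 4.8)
The plan is to use that in all three cases the target space has \emph{global} component $L^\infty$, so that by \eqref{asterisco} with $p=\infty$ and any fixed window $g\in\cC_0^\infty(\rd)$ supported in a ball of radius $R$, the relevant quantity is
\[
\sup_{\xi_0\in\rd}\langle\xi_0\rangle^\gamma\,\|\sigma\,T_{\xi_0}g\|_{\cF L^q},
\]
where each localized piece $\sigma\,T_{\xi_0}g$ is supported in $B(\xi_0,R)$. On any ball on which $\sigma$ is smooth, Lemma \ref{L3} reduces the inner $\cF L^q$-norm to $\sup_{|\beta|\le k}\|\partial^\beta(\sigma\,T_{\xi_0}g)\|_{L^\infty}$, which by the Leibniz rule is controlled by $\sup_{|\beta|\le k}\|\partial^\beta\sigma\|_{L^\infty(B(\xi_0,R))}$. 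Thus the whole matter boils down to pointwise derivative bounds on $\sigma$, together with a separate treatment of $\xi=0$, where $|\xi|$ fails to be smooth.

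For (iii) and (ii) I would first observe that there is in fact no singularity: since $\cos\sqrt{t}$ and $(\sin\sqrt{t})/\sqrt{t}$ are entire functions of $t$, the symbols $\tau(\xi)=\cos|\xi|$ and $\sigma_{1,1}(\xi)=(\sin|\xi|)/|\xi|$ are compositions of these entire functions with the polynomial $|\xi|^2$, hence lie in $\cC^\infty(\rd)$. For (iii) a Fa\`a di Bruno expansion, using $\partial^\beta|\xi|=O(\langle\xi\rangle^{1-|\beta|})$ for $|\beta|\ge1$, shows that every $\partial^\beta\cos|\xi|$ is bounded on all of $\rd$; Lemma \ref{L3} then gives $\sup_{\xi_0}\|\tau\,T_{\xi_0}g\|_{\cF L^p}<\infty$ for every $0<p\le\infty$. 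For (ii) the same expansion, now using the decay of $|\xi|^{-1}$, yields $\partial^\beta\sigma_{1,1}=O(\langle\xi\rangle^{-1})$, so that $\|\sigma_{1,1}T_{\xi_0}g\|_{\cF L^p}\lesssim\langle\xi_0\rangle^{-1}$ and the weight $\langle\xi_0\rangle$ is absorbed, again for all $p$.

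For (i) the symbol $\sigma_{\alpha,\delta}$ is genuinely nonsmooth at the origin, so I would split according to whether $|\xi_0|>2R$ or $|\xi_0|\le 2R$. In the far region $B(\xi_0,R)$ avoids the origin and $\sigma_{\alpha,\delta}$ is smooth there; the decisive point is the hypothesis $\alpha\le1$, which keeps all derivatives of $\sin|\xi|^\alpha$ bounded for large $|\xi|$ (in a Fa\`a di Bruno expansion each factor $\partial^\gamma|\xi|^\alpha$ behaves like $|\xi|^{\alpha-|\gamma|}$, and a product of $m$ such factors contributes $|\xi|^{m\alpha-|\beta|}$ with nonpositive exponent, since $m\le|\beta|$ and $\alpha\le1$), while $|\xi|^{-\delta}$ supplies the decay. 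This gives $\sup_{|\beta|\le k}\|\partial^\beta\sigma_{\alpha,\delta}\|_{L^\infty(B(\xi_0,R))}\lesssim\langle\xi_0\rangle^{-\delta}$, and Lemma \ref{L3} together with the weight $\langle\xi_0\rangle^{\delta}$ yields a uniform bound. In the near region the weight is harmless ($\langle\xi_0\rangle^\delta\simeq1$) and every localized piece is supported in the fixed ball $B(0,3R)$; here I would use that $\cF L^1$ is a Banach algebra under pointwise multiplication to bound $\|\sigma_{\alpha,\delta}T_{\xi_0}g\|_{\cF L^1}\le\|\chi\,\sigma_{\alpha,\delta}\|_{\cF L^1}\|g\|_{\cF L^1}$ for a cutoff $\chi\in\cC_0^\infty$ equal to $1$ on $B(0,3R)$.

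The main obstacle is precisely this last local membership $\chi\,\sigma_{\alpha,\delta}\in\cF L^1$. Near the origin $\sigma_{\alpha,\delta}(\xi)\sim|\xi|^{\alpha-\delta}$ with $\alpha-\delta\ge0$ (the remaining Taylor terms of $\sin$ being more regular), and the Fourier transform of a compactly localized $|\xi|^{\alpha-\delta}$ decays like $|y|^{-d-(\alpha-\delta)}$, which is integrable precisely because $\alpha-\delta>0$ (the case $\alpha=\delta$ giving a trivial constant leading term with first correction $|\xi|^{2\alpha}$). Verifying this integrability is the crux, and it is also what confines (i) to the value $q=1$: the nonsmooth singularity at $\xi=0$ obstructs membership in the smaller spaces $\cF L^q$ with $q<1$, whereas the globally smooth symbols of (ii) and (iii) survive for every $0<p\le\infty$.
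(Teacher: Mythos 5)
Your proposal is correct and follows essentially the same route as the paper: split $\sigma_{\a,\delta}$ into a piece supported near the origin and an oscillating piece at infinity, control the latter (and the smooth symbols of (ii), (iii)) via Lemma \ref{L3} together with the derivative bounds $|\partial^\beta\sigma_{osc}(\xi)|\lesssim\la\xi\ra^{-\delta}$ and the weight inequality \eqref{aggiunta3}. The only difference is at the singular part, where the paper simply invokes $\sigma_{sing}\in\cF L^1$ from \cite[Theorem 9]{benyi}, while you sketch a self-contained justification via the Taylor expansion of $\sin$ and the $|y|^{-d-(\a-\delta)}$ decay of the Fourier transform of a localized $|\xi|^{\a-\delta}$; that sketch is sound, though to be complete you would still need to sum the $\cF L^1$ norms over the Taylor series.
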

\begin{proof}  We first prove ({\it i}) and ({\it ii}). We consider a function $\chi\in\cC^\infty_0(\rd)$, $ 1\leq \chi(\xi)\leq
2$, such that $\chi(\o)=1$ if
$|\o|\leq 1$, whereas
 $\chi(\o)=0$ if $|\o|\geq 2$. Then, we split the multiplier into the sum of two functions $\sigma_{sing}$ and $\sigma_{osc}$, bearing the singularity at the origin and the oscillation at infinity, respectively:
\begin{equation}\label{separazione}
\sigma_{\a,\delta}(\o)=\chi(\o)\sigma_{\a,\delta}(\o)+(1-\chi(\o))\sigma_{\a,\delta}(\o)=\sigma_{sing}(\xi)+\sigma_{osc}(\xi).
\end{equation}
{\em Singularity at the
origin}. First, we shall
prove that   $\sigma_{sing}$
is in $W(\cF
L^1,L^\infty_s)$, for every
$s\in \R$. Indeed, choose
$g\in \cC^\infty_0(\rd)$ such
that supp
$g\subset\{\o\in\rd,
|\o|\leq1\}$, then
$\sigma_{sing}(\o) T_x
g(\o)=0$ if $|x|>3$, so that
\begin{eqnarray*}
  \| \sigma_{sing}\|_{ W(\cF
L^1,L^\infty_s)}&=&
\mbox{ess\,sup}_{x\in\rd}(\|\sigma_{sing}
T_x g \|_{\cF L^1}\la x\ra^s)
=
\mbox{ess\,sup}_{|x|\leq 3}(\|\sigma_{sing} T_x g \|_{\cF L^1}\la x\ra^s)\\
   &\leq&(10)^{|s|/2} \mbox{ess\,sup}_{|x|\leq 3}
   (\|\sigma_{sing} T_x g \|_{\cF
   L^1})\\
   &\leq& (10)^{|s|/2}\|\sigma_{sing}
   \|_{W(\cF L^1,L^\infty)}<\infty,
\end{eqnarray*}
for $\sigma_{sing}\in\cF
L^1\subset W(\cF
L^1,L^\infty)$, (see
\cite[Theorem 9]{benyi}).\par
For $\a=\delta=1$, the
function $\sigma_{sing}$ is
 in $\cC_0^\infty\subset W(\cF L^p,L^\infty)$,
  for every $0<p\leq \infty$.
\par
\noindent {\em Oscillation at
infinity.} At infinity the
multipliers $\sigma_{osc}$
are in $W(\cF
L^p,L^\infty_\delta)$,
 for every $0<p\leq 1$.  The proof uses
  Lemma \ref{L3}, applied to the function
  $\sigma_{osc}T_x g\in\cC^\infty_0(\rd)$, with $g\in\cC^\infty_0(\rd)$.
  Precisely, we observe
  the decay properties of
  $\sigma_{osc}$,
 \begin{equation}\label{aggiunta1}
 |\partial^\a \sigma_{osc}(\o)|\lesssim
  \la\o\ra^{-\delta},\quad \forall \a\in\bZ^d_+
  \end{equation}
  and those of the window $g$,
\begin{equation*}
|\partial^\a g(\o-x)|\lesssim
\la x-\o\ra^{-N},\quad
 \forall x,\o \in\rd,\ N\in\mathbb{N}.
\end{equation*}
Combining the preceding
estimates and the weight
 property
 \begin{equation}\label{aggiunta3}
 \la\o\ra^{-\delta}\la x-\o\ra^{-|\delta|}
 \leq \la x\ra^{-\delta}\end{equation}
  yields
$$\|\sigma_{osc} \|_{W(\cF L^p, L^\infty_\delta)}
=\sup_{x\in\rd}\|\sigma_{osc}T_x g\|_{\cF L^p}\la
 x\ra^\delta<\infty.$$
 Finally, to prove ({\it iii}), observe that
$\tau\in\cC^\infty(\rd)$
fulfills  $$|\partial^\a
\tau(\xi)|\lesssim 1,\quad
\forall \o\in\rd;$$ so,
arguing similarly to what
done for $\sigma_{osc}$
before,  we obtain the claim.
\end{proof}

\noindent
\begin{corollary}\label{C1}
Let $s\in\R$, $0<q\leq \infty$.\\
(i) For every $1\leq
p\leq\infty$, the Fourier
multiplier
$H_{\sigma_{\a,\delta}}$,
with symbol
$\sigma_{\a,\delta}$ defined
in \eqref{multsymb}, extends
to a bounded operator from
$\mathcal{M}^{p,q}_s(\rd)$
into
$\mathcal{M}^{p,q}_{s+\delta}(\rd)$,
with
\begin{equation}\label{fumultest3}
    \|H_{\sigma_{\a,\delta}} f\|_{M^{p,q}_{s+\delta}}\lesssim \|{\sigma_{\a,\delta}}\|_{W(\cF L^1,
    L^\infty_{\delta})}\|f\|_{M^{p,q}_{s}}.
\end{equation}
(ii) For every $0<p<1$,
$H_{\sigma_{1,1}}$ extends to
a bounded operator from
$\mathcal{M}^{p,q}_s(\rd)$
into
$\mathcal{M}^{p,q}_{s+1}(\rd)$
with
\begin{equation}\label{fumultest234}
    \|H_{\sigma_{1,1}} f\|_{M^{p,q}_{s+1}}\lesssim \|{\sigma_{1,1}}\|_{W(\cF L^p,
    L^\infty_{1})}\|f\|_{M^{p,q}_{s}}.
\end{equation}
(iii) For every $0<
p\leq\infty$, the Fourier
multiplier $H_{\tau}$, with
the symbol $\tau$ in
\eqref{multsymb2} extends to
a bounded operator from
$\mathcal{M}^{p,q}_s(\rd)$
into
$\mathcal{M}^{p,q}_{s}(\rd)$,
with
\begin{equation}\label{fumultest4}
    \|H_{\tau} f\|_{M^{p,q}_{s}}
    \lesssim \|{\tau}\|_{W(\cF L^r,
    L^\infty)}\|f\|_{M^{p,q}_{s}},
\end{equation}
where $r=\min\{1,p\}$.
\end{corollary}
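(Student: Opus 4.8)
The statement is a corollary of the two preceding propositions, so the plan is simply to feed the membership results of Proposition~\ref{A1} into the general multiplier bounds of Proposition~\ref{L1}, matching the hypotheses in each case. No fresh analysis is needed: every decay estimate and localization argument has already been carried out, and what remains is to read off the correct exponents and weights.

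For (i), Proposition~\ref{A1}(i) gives $\sigma_{\a,\delta}\in W(\cF L^1,L^\infty_\delta)$ whenever $\delta\leq\a\leq1$, $\a>0$. Since $1\leq p\leq\infty$, I would apply Proposition~\ref{L1}(i) with $\gamma=\delta$; this produces at once the boundedness $H_{\sigma_{\a,\delta}}\colon\cM^{p,q}_s\to\cM^{p,q}_{s+\delta}$ and the quantitative bound \eqref{fumultest3}. Part (ii) is identical in spirit: Proposition~\ref{A1}(ii) supplies $\sigma_{1,1}\in W(\cF L^p,L^\infty_1)$ for all $0<p\leq\infty$, and in the range $0<p<1$ I would invoke Proposition~\ref{L1}(ii) with $\gamma=1$ to obtain the mapping $\cM^{p,q}_s\to\cM^{p,q}_{s+1}$ together with \eqref{fumultest234}. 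The only reason the quasi-Banach range is available here is that for $\a=\delta=1$ the singular part $\sigma_{sing}$ is smooth and compactly supported, hence harmless; this was already exploited in Proposition~\ref{A1}.

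The single point requiring care is (iii), where the choice between the two halves of Proposition~\ref{L1} fixes the value of $r$. Proposition~\ref{A1}(iii) gives $\tau\in W(\cF L^p,L^\infty)$ for every $0<p\leq\infty$. For $1\leq p\leq\infty$ I would use Proposition~\ref{L1}(i) with $\gamma=0$ and the symbol class $W(\cF L^1,L^\infty)$, so that $r=\min\{1,p\}=1$; for $0<p<1$ I would instead use Proposition~\ref{L1}(ii) with $\gamma=0$ and the symbol class $W(\cF L^p,L^\infty)$, so that $r=\min\{1,p\}=p$. In both sub-cases $s+\gamma=s$, so $H_\tau$ maps $\cM^{p,q}_s$ into itself, and the two regimes combine into the uniform statement with $r=\min\{1,p\}$ and the bound \eqref{fumultest4}. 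I expect no genuine obstacle here: the only thing to verify is this bookkeeping of $r$ against the threshold $p=1$, together with the trivial observation that $\tau$ lies in the required amalgam space for the relevant value of $p$.
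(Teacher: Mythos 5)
Your proposal is correct and matches the paper's argument, which is exactly the one-line deduction "the desired result follows from Propositions \ref{A1} and \ref{L1}"; your case-by-case bookkeeping (in particular splitting (iii) at $p=1$ to explain $r=\min\{1,p\}$) is just a spelled-out version of what the paper leaves implicit.
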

\begin{proof}
The desired result follows
from Propositions \ref{A1}
and  \ref{L1}.
\end{proof}

\begin{proposition}\label{prop24}
(i) The functions
$\sigma(\xi)=e^{\pm i|\xi|}$
belong to $M^{\infty,1}$, so
that $\hat{\sigma}\in W(\Fur
L^\infty,L^1)$.\par\noindent
(ii) The function
$\sigma_{1,1}$ in
\eqref{multsymb} satisfies
$\widehat{\sigma_{1,1}}\in
W(\Fur
L^\infty_1,L^1_\gamma)$ for
every $\gamma\in\R$.\\
(iii) The function $\tau$ in
\eqref{multsymb2} satisfies
$\widehat{\tau}\in W(\Fur
L^\infty_1,L^1_\gamma)$ for
every $\gamma\in\R$.
\end{proposition}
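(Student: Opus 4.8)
The plan is to deduce all three parts from a single input—sharp control of the short-time Fourier transform of $e^{\pm i|\xi|}$—together with the Fourier-transform duality between modulation and Wiener amalgam spaces. Feeding the covariance identity $V_gf(x,\omega)=e^{-2\pi i x\cdot\omega}V_{\hat g}\hat f(\omega,-x)$ into the two norms shows that $\mathcal F$ is an isomorphism $M^{p,q}_s\to W(\cF L^p,L^q_s)$, and that under $\mathcal F$ a weight $\langle x\rangle^b$ in the \emph{position} variable becomes a weight on the local $\cF L^p$ component, while a weight $\langle\omega\rangle^s$ in the \emph{frequency} variable becomes the global $L^q$ weight (see \cite{grochenig}). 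In particular (i) is exactly equivalent to $e^{\pm i|\xi|}\in M^{\infty,1}$, and (ii)--(iii) reduce to membership of $\sigma_{1,1}$ and $\tau$ in appropriately weighted versions of $M^{\infty,1}$. Thus everything rests on the STFT of $e^{\pm i|\xi|}$.

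The core step is to prove $e^{\pm i|\xi|}\in M^{\infty,1}$, i.e. $\int_{\rd}\sup_{x}|V_g\sigma(x,\omega)|\,d\omega<\infty$. I would split $\sigma=\chi\sigma+(1-\chi)\sigma$ as in \eqref{separazione}. Near the origin $\chi\sigma$ is compactly supported, and its only non-smoothness is carried by $\sin|\xi|=|\xi|\,h(|\xi|^2)$ with $h$ entire; the conical singularity of $|\xi|$ has Fourier transform decaying like $\langle y\rangle^{-d-1}\in L^1$, so $\chi\sigma\in\cF L^1\subset M^{\infty,1}$. Away from the origin $(1-\chi)\sigma$ is smooth with amplitude $1$ and phase $|\xi|$, whose gradient has modulus $1$ and whose higher derivatives are $O(|\xi|^{-1})$. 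Hence the full phase $\pm|\xi|-2\pi\xi\cdot\omega$ of $V_g\sigma(x,\omega)$ is non-stationary as soon as $|2\pi\omega|$ is bounded away from $1$, and repeated integration by parts—with the window derivatives integrated uniformly in $x$—gives $\sup_x|V_g\sigma(x,\omega)|\lesssim_N\langle\omega\rangle^{-N}$ for large $|\omega|$. Combined with the trivial bound $\|g\|_{L^1}$ for bounded $\omega$, this is integrable, so $\sigma\in M^{\infty,1}$; by the first paragraph this is (i).

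For the weights I would use two refinements of the same computation. First, the instantaneous frequency of $e^{\pm i|\xi|}$ is $\nabla|\xi|=\xi/|\xi|$, of modulus $1$; the decay estimate above shows that $\sup_x|V_g\sigma(x,\omega)|$ is, apart from a rapidly decaying tail, concentrated in the band $|\omega|\lesssim1$, where any frequency weight $\langle\omega\rangle^\gamma$ is bounded. Inserting it for free yields $e^{\pm i|\xi|}\in M^{\infty,1}_{\langle\omega\rangle^\gamma}$, hence by linearity $\tau=\tfrac12(e^{i|\xi|}+e^{-i|\xi|})\in M^{\infty,1}_{\langle\omega\rangle^\gamma}$, which under $\mathcal F$ gives the global weight $\gamma$ of (iii) (the local component staying $\cF L^\infty$, as for $e^{\pm i|\xi|}$, since $\tau$ carries no amplitude decay). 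Second, $\sigma_{1,1}=\frac{\sin|\xi|}{|\xi|}$ carries the extra amplitude decay $|\xi|^{-1}$: this exactly balances a position weight $\langle x\rangle$ in the STFT (the $|x|^{-1}$ decay of $V_g\sigma_{1,1}$ compensates $\langle x\rangle$), so $\sigma_{1,1}\in M^{\infty,1}_{\langle x\rangle\langle\omega\rangle^\gamma}$, and under $\mathcal F$ the position weight becomes precisely the weight $1$ on the local $\cF L^\infty$ component in (ii).

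I expect the main obstacle to be the uniform-in-$x$ high-frequency bound in the second paragraph: one must run the non-stationary phase argument with constants controlled by the distance of $|2\pi\omega|$ from the resonance value $1$, and simultaneously dovetail it with the conical singularity at the origin. Once that single estimate is in place, the passage to the weighted statements is just the bookkeeping of the previous paragraph, and the reduction to modulation spaces collapses the three claims into one.
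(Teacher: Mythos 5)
Your route is genuinely different from the paper's. The paper proves (i) by citation (\cite[Theorem 1]{benyi}) and handles (ii)--(iii) directly on the Wiener amalgam side: after the splitting \eqref{separazione} it writes $\cF\bigl(\widehat{\sigma_{osc}}\,T_y g\bigr)=\widetilde{\sigma_{osc}}\ast M_{-y}\hat g$ and integrates by parts to obtain the $L^1$ decay in $y$, absorbing the local weight $\langle\xi\rangle$ through the symbol decay \eqref{aggiunta1} combined with \eqref{aggiunta3}. You instead transport all three claims to weighted $M^{\infty,1}$ statements for the symbols themselves, via the STFT covariance identity, and prove those by (non-)stationary phase. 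Your dictionary (position weight $\leftrightarrow$ weight on the local $\cF L^p$ component, frequency weight $\leftrightarrow$ weight on the global $L^q$ component) is correct, and parts (i) and (ii) are sound: the conical-singularity argument for $\chi\sigma\in\cF L^1$, the bound $\sup_x|V_g\sigma(x,\omega)|\lesssim_N\langle\omega\rangle^{-N}$ off the resonant sphere $|2\pi\omega|=1$, and the observation that the amplitude decay $|\xi|^{-1}$ of $\sigma_{1,1}$ is exactly what generates the local weight $1$ in (ii). What your approach buys is a self-contained proof of (i) (at the cost of redoing work the paper outsources) and a transparent accounting of where each weight comes from.

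The one real discrepancy is (iii). As you yourself concede, $\tau$ has no amplitude decay, so your argument yields $\widehat{\tau}\in W(\Fur L^\infty, L^1_\gamma)$ with \emph{no} weight on the local component, whereas the statement claims $W(\Fur L^\infty_1, L^1_\gamma)$. This cannot be repaired: in $d=1$ one has $\widehat{\tau}=\tfrac12\bigl(\delta_{1/(2\pi)}+\delta_{-1/(2\pi)}\bigr)$, and $\cF(\widehat{\tau}\,T_yg)$ is bounded but non-decaying in $\xi$, so $\widehat{\tau}$ does not lie in any $W(\Fur L^\infty_1,\cdot)$. Note that the paper's own proof of (iii) is the remark that it is ``completely similar'' to (ii), yet the step of (ii) that absorbs the factor $\langle\xi\rangle$ uses \eqref{aggiunta1} with $\delta=1$, while for $\tau_{osc}$ only $\delta=0$ is available; so the paper's argument likewise delivers only the unweighted local component. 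This is harmless downstream, since Corollary \ref{C3}(ii) exhibits no smoothing for $H_\tau$ and only needs $\cF L^q_s\ast\cF L^\infty\hookrightarrow\cF L^q_s$; but you should either state (iii) without the subscript $1$ or flag the issue explicitly rather than leave the weighted claim unproved.
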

\begin{proof}
Part ({\it i}) was proved in
\cite[Theorem 1]{benyi}. In order to
prove the second point, we
split the symbol
$\sigma_{1,1}$ as in
\eqref{separazione} (with
$\alpha=\delta=1$). Then
$\sigma_{sing}\in
\mathcal{C}^\infty_0(\R^d)$,
so that
$\widehat{\sigma}\in\cS(\R^d)\subset
W(\Fur
L^\infty_1,L^1_\gamma)$ for
every $\gamma\in\R$. We now
treat the symbol
$\sigma_{osc}$. We observe
that $\widehat{
\widehat{\sigma_{osc}} T_y
g}=\widetilde{\sigma_{osc}}\ast
M_{-y}\hat{g}$, where
$\widetilde{\sigma_{osc}}(\xi)=\sigma_{osc}(-\xi)$.
Hence we will obtain
$\widehat{\sigma_{osc}} \in
W(\Fur
L^\infty_1,L^1_\gamma)$ if we
prove that
\[
\|\langle\xi\rangle\int
e^{-2\pi i yx} \hat{g}(x)
\sigma_{osc}
(\xi-x)\,dx\|_{L^\infty}\leq
C (1+|y|^2)^{-N}
\]
for an integer $N$ such that
$N-\gamma>d/2$. We multiply
this inequality by
$(1+|y|^2)^{N}$. By an
integration by part and the
Leibniz rule we see that it
suffices to prove that
\[
\langle\xi\rangle\int|\partial^\alpha
\hat{g}(x)\partial^\beta
\sigma_{osc}(\xi-x)|\,dx\leq
C,\quad \forall \xi\in\R^d,\
|\alpha|+|\beta|\leq 2N.
\]
 Then one concludes by
applying \eqref{aggiunta1}
(with $\delta=1$), combined
with the estimate
$|\partial^\a
\hat{g}(x)|\lesssim \langle
x\rangle^{-N'}$, for all
$x\in\rd$, $N'\in\mathbb{N}$,
and \eqref{aggiunta3}.\par
The proof of ({\it iii}) is
completely similar.
\end{proof}
\begin{corollary}\label{C3}
Let $s,\gamma\in\R$, $1\leq
p,q\leq \infty$.\\
(i) The Fourier multiplier
$H_{\sigma_{1,1}}$, with
symbol
$\sigma_{1,1}(\xi)=\frac{\sin|\xi|}{|\xi|}$
extends to a bounded operator
from $W(\Fur
L^q_s,L^p_\gamma)$ into
$W(\Fur
L^q_{s+1},L^p_\gamma)$, with
\begin{equation}\label{fumultest5}
    \|H_{\sigma_{1,1}} f\|_{W(\Fur L^q_{s+1},L^p_\gamma)}
    \lesssim \|{\sigma_{1,1}}\|_{W(\cF L^\infty_1,
    L^1_{|\gamma|})}\|f\|_{W(\Fur L^q_s,L^p_\gamma)}.
\end{equation}
(ii)  The Fourier multiplier
$H_{\tau}$, with symbol
$\tau(\xi)=\cos|\xi|$ extends
to a bounded operator from
$W(\Fur L^q_s,L^p_\gamma)$
into $W(\Fur
L^q_{s},L^p_\gamma)$, with
\begin{equation}
    \|H_{\tau} f\|_{W(\Fur L^q_{s},L^p_\gamma)}
    \lesssim \|{\tau}\|_{W(\cF L^\infty_1,
    L^1_{|\gamma|})}\|f\|_{W(\Fur L^q_s,L^p_\gamma)}.
\end{equation}

\end{corollary}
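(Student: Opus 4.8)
The plan is to realize each multiplier as convolution with a kernel belonging to a suitable Wiener amalgam space, and then to read off the mapping property from the convolution relation \eqref{conv0}. Writing $H_\sigma f=\cF^{-1}(\sigma\hat f)=k\ast f$ with $k=\cF^{-1}\sigma$, and observing that $k$ is the reflection of $\widehat{\sigma}$ (so that $k$ and $\widehat\sigma$ carry the same Wiener amalgam norms, the weights $\la\cdot\ra^{s}$ being even), Proposition \ref{prop24} identifies the kernels: for $\sigma_{1,1}$ we have $k\in W(\Fur L^\infty_1,L^1_{|\gamma|})$ by part (ii), and for $\tau$ we have $k\in W(\Fur L^\infty_1,L^1_{|\gamma|})$ by part (iii), in both cases applied with the global weight exponent $|\gamma|$ rather than $\gamma$.

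For part (i) it then suffices, by \eqref{conv0}, to verify the two elementary relations
\[
\Fur L^\infty_1\ast \Fur L^q_s\hookrightarrow \Fur L^q_{s+1},\qquad L^1_{|\gamma|}\ast L^p_\gamma\hookrightarrow L^p_\gamma .
\]
The first, after taking Fourier transforms, reduces to the pointwise estimate $L^\infty_1\cdot L^q_s\hookrightarrow L^q_{s+1}$, since $\la\xi\ra^{s+1}|\hat u\,\hat v|=(\la\xi\ra|\hat u|)(\la\xi\ra^{s}|\hat v|)\in L^\infty\cdot L^q=L^q$; this is exactly where the gain of one derivative in the target index $s+1$ originates, matching the $|\xi|^{-1}$ decay of $\sigma_{1,1}$. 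The second is the weighted Young inequality: from Peetre's inequality $\la x+y\ra^{\gamma}\lesssim\la x\ra^{|\gamma|}\la y\ra^{\gamma}$, valid for $\gamma$ of either sign, convolution with an $L^1_{|\gamma|}$ function is bounded on $L^p_\gamma$. Feeding both inclusions into \eqref{conv0} gives
\[
W(\Fur L^\infty_1,L^1_{|\gamma|})\ast W(\Fur L^q_s,L^p_\gamma)\hookrightarrow W(\Fur L^q_{s+1},L^p_\gamma),
\]
which is precisely \eqref{fumultest5}.

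Part (ii) is handled identically, the only difference being that the symbol $\tau=\cos|\xi|$ is bounded rather than decaying, so no derivative is gained. Accordingly I would replace the first convolution relation by $\Fur L^\infty_1\ast\Fur L^q_s\hookrightarrow\Fur L^q_s$, which follows a fortiori from $L^\infty_1\subset L^\infty$ together with $L^\infty\cdot L^q_s\subset L^q_s$; the second relation is unchanged. Then \eqref{conv0} yields the boundedness of $H_\tau$ from $W(\Fur L^q_s,L^p_\gamma)$ into $W(\Fur L^q_s,L^p_\gamma)$ with the asserted bound. Throughout, the indices stay in the Banach range $1\leq p,q\leq\infty$, so the convolution lemma applies without modification.

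Once Proposition \ref{prop24} is available the argument is essentially bookkeeping, and the single point demanding care is the global weight. Since $\gamma$ may be negative, the kernel cannot be taken in $L^1_\gamma$: the weighted Young inequality forces the weight $|\gamma|$, and this is exactly why Proposition \ref{prop24} is established for \emph{every} $\gamma\in\R$ and then invoked at exponent $|\gamma|$. This weight matching, together with the trivial check that the amalgam norms are invariant under reflection so that $\cF^{-1}\sigma$ and $\widehat\sigma$ may be used interchangeably, is the only genuine subtlety; all the remaining content collapses to the abstract convolution relation \eqref{conv0} and the two one-line inclusions above.
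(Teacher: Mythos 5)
Your proposal is correct and follows essentially the same route as the paper: realize $H_\sigma$ as convolution with the kernel identified in Proposition \ref{prop24}, then apply the convolution relation \eqref{conv0} with the local inclusion $\Fur L^\infty_1\ast\Fur L^q_s\hookrightarrow \Fur L^q_{s+1}$ (resp.\ $\Fur L^q_s$) and the global inclusion $L^1_{|\gamma|}\ast L^p_\gamma\hookrightarrow L^p_\gamma$. The paper states this in one line; you have merely filled in the two elementary inclusions and the (harmless) reflection issue.
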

\begin{proof}
Since $H_{\sigma}
f=\widehat{\sigma}\ast f$,
when $\sigma$ is a temperate
distribution and $f$ is a
Schwartz function, the
desired result follows at
once from Proposition
\ref{prop24} and the
convolution relations
\eqref{conv0} of Wiener
amalgam spaces (recall,
$L^p_\gamma\ast
L^1_{|\gamma|}\hookrightarrow
L^p_\gamma$).
\end{proof}\par

\section{Local  wellposedness of NLW}
In this section we establish
and prove the wellposedness
result outlined in the
Introduction.
\begin{theorem}\label{T1} Assume $s\geq 0$,
 $1\leq p\leq\infty$,
 $(u_0,u_1)\in \mathcal{M}^{p,1}_s(\rd)\times
\mathcal{M}^{p,1}_{s-1}(\rd)$
and $F(z)=\sum_{j,k=0}^\infty
c_{j,k} z^j \bar{z}^k$, an
entire real-analytic function
on $\bC$ with $F(0)=0$. For
every $R>0$, there exists
$T>0$ such that for every
$(u_0,u_1)$ in the ball $B_R$
of center $0$ and radius $R$
in $\mathcal{M}^{p,1}_s(\rd)
\times\mathcal{M}^{p,1}_{s-1}(\rd)$
there exists a unique
solution $u\in
\cC^0([0,T];\mathcal{M}^{p,1}_s(\rd))$
to \eqref{solop}.
Furthermore, the map
$(u_0,u_1)\mapsto u$ from
$B_R$ to $
\cC^0([0,T];\mathcal{M}^{p,1}_s(\rd))$
is Lipschitz continuous.
\end{theorem}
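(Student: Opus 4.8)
The plan is to solve the fixed-point problem associated to \eqref{solop} by the contraction mapping principle in the Banach space $X_T=\cC^0([0,T];\mathcal{M}^{p,1}_s(\rd))$, normed by $\|u\|_{X_T}=\sup_{t\in[0,T]}\|u(t)\|_{\mathcal{M}^{p,1}_s}$. Writing
\[
\Phi(u)(t)=K'(t)u_0+K(t)u_1+\int_0^t K(t-\tau)F(u(\tau))\,d\tau,
\]
I would seek $T>0$ and $M>0$ (depending only on $R$) so that $\Phi$ carries the closed ball $\{u\in X_T:\|u\|_{X_T}\le M\}$ into itself and is a contraction there. The two ingredients are the linear propagator estimates supplied by Corollary \ref{C1} and a nonlinear estimate for $F$ built on the algebra property of $\mathcal{M}^{p,1}_s$.

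For the linear part, recall from \eqref{op2} that $K'(t)$ and $K(t)$ are Fourier multipliers with symbols $\cos(2\pi t|\o|)$ and $\sin(2\pi t|\o|)/(2\pi|\o|)$, which are dilations of the symbols $\tau$ and $\sigma_{1,1}$ of Proposition \ref{A1}. Hence Corollary \ref{C1}(iii) gives $K'(t):\mathcal{M}^{p,1}_s\to\mathcal{M}^{p,1}_s$ and Corollary \ref{C1}(i) (with $\alpha=\delta=1$) gives the smoothing bound $K(t):\mathcal{M}^{p,1}_{s-1}\to\mathcal{M}^{p,1}_s$. The symbol seminorms entering the proof of Proposition \ref{A1} grow only polynomially in $t$, so these operator norms are bounded by a constant $C_T$ uniformly for $t\in[0,T]$; combined with the embedding $\mathcal{M}^{p,1}_s\hookrightarrow\mathcal{M}^{p,1}_{s-1}$ this yields, for $u\in X_T$,
\[
\Big\|\int_0^t K(t-\tau)F(u(\tau))\,d\tau\Big\|_{\mathcal{M}^{p,1}_s}\le C_T\int_0^t\|F(u(\tau))\|_{\mathcal{M}^{p,1}_s}\,d\tau,
\]
together with $\|K'(t)u_0+K(t)u_1\|_{\mathcal{M}^{p,1}_s}\le C_T(\|u_0\|_{\mathcal{M}^{p,1}_s}+\|u_1\|_{\mathcal{M}^{p,1}_{s-1}})$.

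For the nonlinear part, the key point is that $\mathcal{M}^{p,1}_s$ is a Banach algebra: taking $q=r=1$ in \eqref{norms} of Lemma \ref{L2} gives $\|\prod_{i=1}^N u_i\|_{\mathcal{M}^{p,1}_s}\le\prod_{i=1}^N\|u_i\|_{\mathcal{M}^{p,1}_s}$ for every $N$. Since the $\mathcal{M}^{p,1}_s$-norm is invariant under complex conjugation and the weight $\la\o\ra^s$ is even, each monomial obeys $\|u^j\bar u^k\|_{\mathcal{M}^{p,1}_s}\le\|u\|_{\mathcal{M}^{p,1}_s}^{j+k}$. Summing the series for $F$ gives $\|F(u)\|_{\mathcal{M}^{p,1}_s}\le G(\|u\|_{\mathcal{M}^{p,1}_s})$, where $G(r)=\sum_{j,k}|c_{j,k}|r^{j+k}$; since $F$ is entire this majorant converges for every $r\ge0$, and $G(0)=0$ because $F(0)=0$. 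A telescoping decomposition of $u^j\bar u^k-v^j\bar v^k$ into $j+k$ products, each carrying one factor $u-v$ or $\bar u-\bar v$, yields the Lipschitz estimate $\|F(u)-F(v)\|_{\mathcal{M}^{p,1}_s}\le G'(M)\|u-v\|_{\mathcal{M}^{p,1}_s}$ whenever $\|u\|_{\mathcal{M}^{p,1}_s},\|v\|_{\mathcal{M}^{p,1}_s}\le M$, with $G'$ again a convergent (entire) majorant.

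Assembling these, for $(u_0,u_1)\in B_R$ I obtain $\|\Phi(u)\|_{X_T}\le C_T R+C_T T\,G(M)$ and $\|\Phi(u)-\Phi(v)\|_{X_T}\le C_T T\,G'(M)\|u-v\|_{X_T}$. Fixing $M=2C_1R$ (with $C_1=\sup_{T\le1}C_T<\infty$) and then choosing $T\le1$ small enough that $C_1T\,G(M)\le C_1R$ and $C_1T\,G'(M)\le\tfrac12$ makes $\Phi$ a self-map and a contraction of the ball of radius $M$; the Banach fixed-point theorem then produces the unique solution $u\in X_T$. Lipschitz dependence follows by subtracting the integral equations for two sets of data and absorbing the contraction term, giving $\|u-v\|_{X_T}\le 2C_1(\|u_0-v_0\|_{\mathcal{M}^{p,1}_s}+\|u_1-v_1\|_{\mathcal{M}^{p,1}_{s-1}})$. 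I expect the only delicate step to be the uniform-in-$t$ control of the propagator constants $C_T$ near $t=0$, i.e.\ the verification that dilating the symbols $\tau,\sigma_{1,1}$ preserves the bounds of Proposition \ref{A1} on bounded time intervals; everything else is the standard Picard iteration once the algebra and multiplier estimates are in hand.
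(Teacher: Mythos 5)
Your proposal is correct and follows essentially the same route as the paper: the multiplier bounds of Corollary \ref{C1} for $K'(t)$ and $K(t)$ (uniform on bounded time intervals), the algebra property \eqref{norms} with $q=r=1$ to control the entire nonlinearity and its Lipschitz constant on a ball, and a contraction argument in $\cC^0([0,T];\mathcal{M}^{p,1}_s)$, which the paper packages as the abstract iteration of Proposition \ref{AIA}. Your telescoping of $u^j\bar u^k-v^j\bar v^k$ is just a variant of the paper's integral identity $F(z)-F(w)=\int_0^1\frac{d}{dt}F(tz+(1-t)w)\,dt$, so the two arguments coincide in substance.
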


The main features of the
proof are given by Fourier
multiplier estimates on
\modsp s, obtained in the
previous section, together
with the following classical
iteration argument (see e.g.
\cite[Proposition
1.38]{tao}).
\begin{proposition}\label{AIA}
Let $\cN$ and $\cT$ be two
Banach spaces. Suppose we are
given a linear operator
$\cB:\cN\to \cT$ with the
bound
\begin{equation}\label{aia1}
\|\cB f\|_{\cT}\leq
C_0\|f\|_{\cN}
\end{equation}
for all $f\in\cN$ and some
$C_0>0$, and suppose that we
are given a nonlinear
operator $F:\cT\to\cN$ with
$F(0)=0$, which obeys the
Lipschitz bounds
\begin{equation}\label{aia2}
\|F(u)-F(v)\|_{\cN}\leq\frac{1}{2C_0}\|u-v\|_{\cT}
\end{equation}
for all $u,v$ in the ball
$B_\mu:=\{u\in\cT:
\|u\|_\cT\leq \mu \}$, for
some $\mu>0$. Then, for all
$u_{\rm lin}\in B_{\mu/2}$
there exists a unique
solution $u\in B_\mu$ to the
equation
\[
u=u_{\rm lin}+\cB F(u),
\]
with the map $u_{lin}\mapsto
u$ Lipschitz with constant at
most $2$ (in particular,
$\|u\|_{\cT}\leq 2\|u_{\rm
lin}\|_{\cT}$).
\end{proposition}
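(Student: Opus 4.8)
The plan is to apply the Banach fixed point theorem to the map
\[
\Phi(u):=u_{\rm lin}+\cB F(u)
\]
on the closed ball $B_\mu$, regarded as a complete metric space with the metric induced by the norm of $\cT$. Fix $u_{\rm lin}\in B_{\mu/2}$. The first step is to verify that $\Phi$ maps $B_\mu$ into itself. For $u\in B_\mu$, writing $F(u)=F(u)-F(0)$ and applying the Lipschitz bound \eqref{aia2} with $v=0$ gives $\|F(u)\|_\cN\leq\frac{1}{2C_0}\|u\|_\cT\leq\frac{\mu}{2C_0}$; combined with the operator bound \eqref{aia1} this yields $\|\cB F(u)\|_\cT\leq C_0\|F(u)\|_\cN\leq\mu/2$. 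Since $\|u_{\rm lin}\|_\cT\leq\mu/2$, the triangle inequality gives $\|\Phi(u)\|_\cT\leq\mu$, so indeed $\Phi(B_\mu)\subseteq B_\mu$.

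The second step is to show that $\Phi$ is a contraction on $B_\mu$. For $u,v\in B_\mu$ one has $\Phi(u)-\Phi(v)=\cB(F(u)-F(v))$, whence by \eqref{aia1} and \eqref{aia2},
\[
\|\Phi(u)-\Phi(v)\|_\cT\leq C_0\|F(u)-F(v)\|_\cN\leq\tfrac12\|u-v\|_\cT.
\]
Thus $\Phi$ is a contraction with constant $1/2$ on the complete metric space $B_\mu$, and the contraction mapping principle produces a unique fixed point $u\in B_\mu$, which is precisely the unique solution in $B_\mu$ of the equation $u=u_{\rm lin}+\cB F(u)$.

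For the Lipschitz dependence on the data, let $u,u'\in B_\mu$ be the solutions associated with $u_{\rm lin},u_{\rm lin}'\in B_{\mu/2}$. Subtracting the two fixed-point identities and arguing exactly as above gives $\|u-u'\|_\cT\leq\|u_{\rm lin}-u_{\rm lin}'\|_\cT+\frac12\|u-u'\|_\cT$, and absorbing the last term into the left-hand side yields $\|u-u'\|_\cT\leq 2\|u_{\rm lin}-u_{\rm lin}'\|_\cT$, i.e. Lipschitz constant at most $2$. Taking $u_{\rm lin}'=0$—for which $u'=0$ is the (unique) solution, since $F(0)=0$ and $\cB$ is linear—gives in particular the bound $\|u\|_\cT\leq 2\|u_{\rm lin}\|_\cT$.

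The argument is entirely routine and there is no real obstacle; the only point requiring attention is the bookkeeping of the constants. The specific choices in the hypotheses, namely the Lipschitz factor $1/(2C_0)$ together with the restriction $u_{\rm lin}\in B_{\mu/2}$, are calibrated precisely so that the self-mapping property and the contraction constant $1/2$ both come out cleanly; with weaker constants one would instead have to shrink the radius $\mu$ accordingly.
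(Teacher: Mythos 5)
Your proof is correct and is exactly the standard contraction-mapping argument that the paper itself invokes by citation (Proposition 1.38 in Tao's book) rather than reproving: self-mapping of $B_\mu$ via $F(0)=0$, contraction constant $1/2$, and absorption of the $\frac12\|u-u'\|_{\mathcal{T}}$ term to get the Lipschitz constant $2$. Nothing is missing.
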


\begin{proof}[Proof of Theorem \ref{T1}]
We first observe that, by
Corollary \ref{C1}, for every
$0<p\leq\infty$ the
multiplier $K'(t)$, with
symbol $\cos(2\pi|\xi|)$, can
be extended to a bounded
operator on
$\mathcal{M}^{p,1}_s$, with
\begin{equation}\label{G1} \|K'(t)
u_0\|_{\mathcal{M}^{p,1}_s}\leq
C\|u_0\|_{\mathcal{M}^{p,1}_s},\quad
t\in[0,1].
\end{equation}
The uniformity of the
constant $C$, when $t$ varies
in bounded subsets, follows
from the proof of the
boundedness property itself.

Similarly, the multiplier
operator $K(t)$
 with symbol $\frac{\sin (2\pi t|\o|)}{2\pi|\o|}$,
   satisfies the
  estimate
\begin{equation}\label{G2}
\|K(t)
u_1\|_{\mathcal{M}^{p,1}_s}\leq
C
\|u_1\|_{\mathcal{M}^{p,1}_{s-1}},\
t\in[0,1],\end{equation} for
every $0<p\leq \infty$.\par
 Now we
are going to apply
Proposition \ref{AIA} with
$\cT=\cN=C^0([0,T];M^{p,1}_s)$,
where $T\leq1$ will be chosen
later on, with the nonlinear
operator $\cB$ given by the
Duhamel operator in
\eqref{op2}. Here $u_{\rm
lin}:=K'(t)u_0+K(t)u_1$ is in
the ball
$B_{\mu/2}\subset\cT$ by
\eqref{G1}, \eqref{G2}, if
$\mu$ is sufficiently large,
depending on $R$. We see that
\eqref{aia1} follows, with a
constant $C_0=O(T)$ from the
Minkowski integral inequality
(now $p\geq1$) and
\eqref{G1}.\par In order to
verify \eqref{aia2} observe
that
\begin{align*}
F(z)-F(w)&=\int_{0}^1\frac{d}{dt}
F(tz+(1-t)w)\,dt\\
&=(z-w)\sum_{j,k,l,m\geq0}c_{j,k,l,m}z^j\overline{z}^k
w^l\overline{w}^m
+(\overline{z}-\overline{w})
\sum_{j,k,l,m\geq0}c'_{j,k,l,m}z^j\overline{z}^k
w^l\overline{w}^m.
\end{align*}
Hence, applying the relation
\eqref{norms} for $q=r=1$,
 we
obtain, for $u,v\in
\cM^{p,1}_s$,
$$\|F(u)-F(v)\|_{\cM^{p,1}_s}\leq
 \|u-v\|_{\cM^{p,1}_s}\sum_{j,k,l,m\geq0}(|c_{j,k,l,m}|+
 |c'_{j,k,l,m}|)
 \|u\|_{\mathcal{\cM}^{p,1}_{s}}^{j+k}
\|v\|_{\mathcal{\cM}^{p,1}_{s}}^{l+m}
<\infty.$$
 This expression is  $\leq C_\mu \|u-v\|_{\cM^{p,1}_s}$
 if $u,v\in B_\mu$.
Hence, by choosing $T$
sufficiently small we
conclude the proof of
existence, and also that of
uniqueness among the solution
in $\cT$ with norm $O(R)$.
This last constraint can be
eliminated by a standard
continuity argument (cf. the
proof of Proposition 3.8 in
\cite{tao}).
\end{proof}

Consider now the nonlinearity
\begin{equation} \label{PW}
F(u)=F_k(u)=\lambda
|u|^{2k}u=\lambda
u^{k+1}\bar{u}^k, \quad
\lambda\in\mathbb{C},\
k\in\N.
\end{equation}
We have the following result.
\begin{theorem}\label{T2}
Let $F(u)$ as in \eqref{PW},
 $1\leq p\leq \infty$, $s\geq0$, and
 \begin{equation}\label{indr}
                q'>2kd.
\end{equation}
For every $R$ there exists
$T>0$ such that for every
$(u_0,u_1)$ in the ball $B_R$
of center $0$ and radius $R$
in $\mathcal{M}^{p,q}_s(\rd)
\times\mathcal{M}^{p,q}_{s-1}(\rd)$
there exists a unique
solution $u\in
\cC^0([0,T];\mathcal{M}^{p,q}_s(\rd))$
to \eqref{solop}. Furthermore
the map $(u_0,u_1)\mapsto u$
from $B_R$ to $
\cC^0([0,T];\mathcal{M}^{p,q}_s(\rd))$
is Lipschitz continuous.
\end{theorem}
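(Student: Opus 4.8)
The plan is to follow the scheme of the proof of Theorem \ref{T1}, replacing the algebra estimate (which was available only for $q=1$) by the multiplicative inequality \eqref{norms} combined with an inclusion between modulation spaces; the exponent condition \eqref{indr} will surface exactly as the requirement that this inclusion be admissible. Concretely, I would apply the abstract iteration Proposition \ref{AIA} with $\cT=\cC^0([0,T];\mathcal{M}^{p,q}_s(\rd))$ and $\cN=\cC^0([0,T];\mathcal{M}^{p,q}_{s-1}(\rd))$, the Duhamel operator $\cB$ of \eqref{op2} playing the role of the linear map and $F$ in \eqref{PW} that of the nonlinearity, so that a solution of $u=u_{\rm lin}+\cB F(u)$ is precisely a solution of \eqref{solop}.

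The linear estimates come directly from Corollary \ref{C1}. The propagator $K'(t)$ has symbol $\cos(2\pi t|\xi|)$, so by part (iii) it is bounded on $\mathcal{M}^{p,q}_s$; the propagator $K(t)$ has symbol $\sin(2\pi t|\xi|)/(2\pi|\xi|)$, of the type $\sigma_{1,1}$, so by part (i) (with $\delta=1$) it maps $\mathcal{M}^{p,q}_{s-1}$ boundedly into $\mathcal{M}^{p,q}_{s}$. Both bounds are uniform for $t\in[0,1]$, the uniformity being read off from the proof of the boundedness itself. Consequently $u_{\rm lin}=K'(t)u_0+K(t)u_1$ lies in $\cT$ with $\|u_{\rm lin}\|_\cT\lesssim \|u_0\|_{\mathcal{M}^{p,q}_s}+\|u_1\|_{\mathcal{M}^{p,q}_{s-1}}=O(R)$, and $\cB:\cN\to\cT$ satisfies \eqref{aia1} with $C_0=O(T)$ (from the Minkowski integral inequality over $[0,T]$, using $p,q\geq1$). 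This places $u_{\rm lin}$ in $B_{\mu/2}$ once $\mu\sim R$.

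The heart of the matter is the nonlinear bound \eqref{aia2}, i.e. the Lipschitz estimate for $F$ as a map $\cT\to\cN$. Exactly as in the proof of Theorem \ref{T1}, I would write, via the fundamental theorem of calculus, $F(u)-F(v)=(u-v)P+(\overline{u}-\overline{v})Q$, where $P,Q$ are polynomials in $u,v,\overline{u},\overline{v}$ homogeneous of degree $2k$; thus $F(u)-F(v)$ is a finite sum of products of $N:=2k+1$ factors, each factor being one of $u-v,\overline{u}-\overline{v},u,v,\overline{u},\overline{v}$. It therefore suffices to estimate $\|\prod_{i=1}^{N} w_i\|_{\mathcal{M}^{p,q}_{s-1}}$. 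Applying \eqref{norms} (Lemma \ref{L2}) with $p_i=Np$, $q_i=q$ lands the product in $\mathcal{M}^{p,r}_s$ with $N/q=N-1+1/r$; since $s\geq0$ this is licit and, because $q\geq1$, one finds $1/r\le 1/q$, i.e. $r\geq q$. Finally I would invoke the modulation-space inclusion $\mathcal{M}^{p,r}_s\hookrightarrow \mathcal{M}^{p,q}_{s-1}$ — the analogue for modulation spaces of \eqref{incl} — which is admissible precisely when $d(1/q-1/r)<1$. A direct computation gives $1/q-1/r=(N-1)(1-1/q)=2k/q'$, so the admissibility condition reads $2kd/q'<1$, that is, \eqref{indr}. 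This yields $\|F(u)-F(v)\|_{\mathcal{M}^{p,q}_{s-1}}\lesssim \|u-v\|_{\mathcal{M}^{p,q}_s}\big(\|u\|_{\mathcal{M}^{p,q}_s}^{2k}+\|v\|_{\mathcal{M}^{p,q}_s}^{2k}\big)$, hence a Lipschitz constant $O(\mu^{2k})$ on $B_\mu$.

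I expect this last step — locating the product in the \emph{correct} space $\mathcal{M}^{p,q}_{s-1}$ rather than merely in $\mathcal{M}^{p,r}_s$, and recognizing that the inclusion costs exactly the one derivative furnished by $K(t)$ if and only if \eqref{indr} holds — to be the main obstacle: the derivative gained by $K(t)$ must balance the loss of frequency integrability incurred by the multiplicative inequality. (When $d=1$ and $q'$ is close to $2kd$, the index $r$ produced by \eqref{norms} may exceed the admissible range $q\le N/(N-1)$; in that case one first embeds each factor into the endpoint space $\mathcal{M}^{p,N/(N-1)}_{s_0}$, at the cost of smoothness, before applying \eqref{norms}.) With the linear and nonlinear bounds in hand, I would fix $\mu\sim R$ and then choose $T$ so small that $C_0$ times the Lipschitz constant, of size $O(T)\,O(\mu^{2k})$, is $\le \tfrac12$, so that \eqref{aia1}--\eqref{aia2} hold; Proposition \ref{AIA} then gives a unique solution in $B_\mu$ together with Lipschitz dependence on $(u_0,u_1)$, and the auxiliary norm constraint is removed by the standard continuity argument as in \cite{tao}.
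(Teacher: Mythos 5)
Your proof follows the paper's argument essentially verbatim: the same choice of $\cT$ and $\cN$ in Proposition \ref{AIA}, the linear bounds from Corollary \ref{C1}, the factorization of $F_k(u)-F_k(v)$ into a sum of products of $N=2k+1$ factors estimated via \eqref{norms}, and the embedding $\mathcal{M}^{p,r}_s\hookrightarrow\mathcal{M}^{p,q}_{s-1}$ whose admissibility condition $\frac{d}{q}-\frac{d}{r}<1$ is exactly \eqref{indr}. Your parenthetical treatment of the endpoint range $2kd<q'<2k+1$ (possible only for $d=1$), where \eqref{norms} cannot be applied directly with $q_i=q$ because the resulting $r$ leaves the admissible range, is a genuine refinement of a point the paper's proof passes over in silence.
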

\begin{proof} The proof goes
as the one of Theorem
\ref{T1}, but with $\cT=
\cC^0([0,T];\mathcal{M}^{p,q}_s(\rd))$,
$\cN=
\cC^0([0,T];\mathcal{M}^{p,q}_{s-1}(\rd))$.
 We only observe that now
$F_k(z)-F_k(w)=(z-w)p_k(z,w)+(\overline{z}-\overline{w})
q_k(z,w)$, where $p_k,q_k$
are polynomials of degree
$2k$ in
$z,w,\overline{z},\overline{w}$
($q_0(z,w)\equiv0$). Using
\eqref{norms} for
$0<p\leq\infty$,
 we obtain
$$
\|F(u)-F(v)\|_{\mathcal{M}^{p,r}_{s-1}}\leq
C
|\lambda|\|u-v\|_{\mathcal{M}^{p,q}_{s-1}}\|
(\|u\|_{\mathcal{M}^{p,q}_{s-1}}^{2k}+
\|v\|_{\mathcal{M}^{p,q}_{s-1}}^{2k}),$$
with
$$
r=\frac q{2k(1-q)+1}.
$$
\noindent Now, the inclusion
relations for modulation
spaces
 \cite{F1,baoxiang2} fulfill
$$\mathcal{M}^{p,r}_s\hookrightarrow
\mathcal{M}^{p,q}_{s-1}
\quad\mbox{if}\quad
\frac{d}{q}-\frac{d}{r}<1,
$$
that is \eqref{indr}. Then
\eqref{aia2} is verified and
this concludes the proof.
\end{proof}
\begin{theorem}\label{T3}
Let $F(u)$ be as in
\eqref{PW},
 $1\leq p\leq \infty$, $s\geq0$, $\gamma\geq0$, and $
                q'>2kd$.
For every $R$ there exists
$T>0$ such that for every
$(u_0,u_1)$ in the ball $B_R$
of center $0$ and radius $R$
in $W(\Fur L^q_s,L^p_\gamma)
\times W(\Fur
L^p_{s-1},L^q_\gamma)$ there
exists a unique solution
$u\in \cC^0([0,T];W(\Fur
L^q_s,L^p_\gamma))$ to
\eqref{solop}. Furthermore
the map $(u_0,u_1)\mapsto u$
from $B_R$ to $
\cC^0([0,T];W(\Fur
L^q_s,L^p_\gamma))$ is
Lipschitz continuous.\par The
same is true for an entire
real-analytic nonlinearity
$F$ as in Theorem \ref{T1}
if, in addition, $q=1$.
\end{theorem}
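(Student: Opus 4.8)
The plan is to mirror the proof of Theorem~\ref{T2}, replacing the role of the modulation spaces $\mathcal{M}^{p,q}_s$ by the Wiener amalgam spaces $W(\Fur L^q_s,L^p_\gamma)$ and invoking the parallel toolkit assembled in Section~2 and Section~3 for these spaces. Concretely, I would apply the abstract iteration scheme of Proposition~\ref{AIA} with the choice
\[
\cT=\cC^0([0,T];W(\Fur L^q_s,L^p_\gamma)),\qquad
\cN=\cC^0([0,T];W(\Fur L^q_{s-1},L^p_\gamma)),
\]
and $T\le 1$ to be fixed at the end, $\cB$ being the Duhamel operator from \eqref{op2}. The linear data $u_{\rm lin}=K'(t)u_0+K(t)u_1$ must be placed in the small ball $B_{\mu/2}\subset\cT$; this is where the multiplier estimates on Wiener amalgam spaces from Corollary~\ref{C3} enter, controlling $K'(t)=\cos(t\sqrt{-\Delta})$ and $K(t)=\sin(t\sqrt{-\Delta})/\sqrt{-\Delta}$ uniformly for $t\in[0,1]$ and accounting for the one-degree gain in regularity that lets $u_1$ sit in the lower space $W(\Fur L^q_{s-1},L^p_\gamma)$.

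The heart of the argument is the nonlinear Lipschitz bound \eqref{aia2}. Writing $F_k(z)-F_k(w)=(z-w)p_k(z,w)+(\overline z-\overline w)q_k(z,w)$ with $p_k,q_k$ polynomials of degree $2k$ exactly as in the proof of Theorem~\ref{T2}, I would estimate the product norm using the multiplicative inequality \eqref{agt} for Wiener amalgam spaces (the analogue of \eqref{norms}), which is licit since $\gamma\ge0$ and $s\ge0$. This produces a bound of the shape
\[
\|F(u)-F(v)\|_{W(\Fur L^r_{s-1},L^p_\gamma)}
\lesssim |\lambda|\,\|u-v\|_{W(\Fur L^q_{s-1},L^p_\gamma)}
\bigl(\|u\|^{2k}+\|v\|^{2k}\bigr),
\]
with $r$ determined by $\frac Nq=N-1+\frac1r$ for $N=2k+1$, i.e.\ the same index relation as before. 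To land back in the target space $\cN=W(\Fur L^q_{s-1},L^p_\gamma)$, I would then invoke the embedding \eqref{incl}, namely $W(\Fur L^r_{s-1},L^p_\gamma)\hookrightarrow W(\Fur L^q_{s-1},L^p_\gamma)$, which is valid precisely when $\frac dq-\frac dr<1$; a short computation shows this condition is equivalent to $q'>2kd$, matching the hypothesis. Smallness of the Lipschitz constant, hence \eqref{aia2}, is then recovered by choosing $T$ small, the time dependence being absorbed through the Minkowski/H\"older estimate for $\cB$ just as in Theorem~\ref{T1}.

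For the final clause concerning an arbitrary entire real-analytic nonlinearity $F$ under the extra assumption $q=1$, the strategy is instead to transpose the proof of Theorem~\ref{T1}: expanding $F(z)-F(w)$ in the absolutely convergent power series and summing term by term, each monomial is controlled by \eqref{agt} with $r=q=1$ (so that the index relation $\frac Nq=N-1+\frac1r$ holds trivially and no embedding step is needed), while entirety of $F$ guarantees convergence of the resulting majorant series on bounded balls. I expect the main obstacle to be purely bookkeeping: verifying that the index arithmetic $\frac Nq=N-1+\frac1r$ with $N=2k+1$ really does reduce the embedding condition $\frac dq-\frac dr<1$ to the clean hypothesis $q'>2kd$, and checking that the weight exponent $\gamma\ge0$ is compatible at every application of \eqref{agt} (which requires $\gamma_i=\gamma/N\ge0$). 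Beyond that, every analytic ingredient has already been prepared in the preceding sections, so the proof is essentially a faithful reprise of the modulation-space arguments with the amalgam-space lemmas substituted in.
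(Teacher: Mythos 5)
Your proposal follows the paper's own proof essentially verbatim: the same choice $\cT=\cC^0([0,T];W(\Fur L^q_s,L^p_\gamma))$, $\cN=\cC^0([0,T];W(\Fur L^q_{s-1},L^p_\gamma))$ in Proposition~\ref{AIA}, the linear estimates from Corollary~\ref{C3}, the multilinear bound \eqref{agt} with $N=2k+1$ combined with the embedding \eqref{incl}, and the same index arithmetic reducing $\frac dq-\frac dr<1$ to $q'>2kd$. The paper merely sketches these steps and omits the details, so your write-up is a correct and slightly more explicit rendering of the same argument.
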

\begin{proof}

It follows from Corollary
\ref{C3} that the following
estimates hold:
\[
\|K'(t)u\|_{W(\Fur
L^q_s,L^p_\gamma)}\lesssim
\|u\|_{W(\Fur
L^q_{s-1},L^p_\gamma)},\quad
t\in[0,1],
\]
and
\[
\|K(t)u\|_{W(\Fur
L^q_s,L^p_\gamma)}\lesssim
\|u\|_{W(\Fur
L^q_{s},L^p_\gamma)}, \quad
t\in[0,1].
\]
We then  argue as in the
proof of the previous
results. For example, for the
nonlinearity \eqref{PW} we
choose $\cT=
\cC^0([0,T];W(\Fur
L^q_s,L^p_\gamma))$, $\cN=
\cC^0([0,T];W(\Fur
L^q_{s-1},L^p_\gamma))$. The
estimate \eqref{aia2} here
follows from the multilinear
estimate \eqref{agt} (with
$N=2k+1$) combined with the
inclusion in \eqref{incl}.
The numerology is the same as
that in the proof of Theorem
\ref{T2}. We omit the detail.
\end{proof}
\begin{remark}\rm
It would be interesting to
know whether the previous
results extend to the
(smaller) spaces
$\cM^{p,q}_s$ or $W(\Fur
L^q_s,L^p_\gamma)$, with
$p<1$. The above method of
proof does not cover this
case because of the lack of
the Minkowski integral
inequality for quasi-Banach
spaces. Indeed, for any
quasi-Banach space
$\mathcal{Q}$ there always
exist an equivalent
quasi-norm $\|\cdot\|$ and a
number $0<r<1$ so that
$\|\cdot\|^r$ satisfies the
triangle inequality. However,
the corresponding integral
version, namely
\[
\|\int_0^T u(t)\,dt\|^r\leq
\int_0^T\|u(t)\|^r dt,
\]
is false. One can see this by
taking $u(t)=av(t)$, where
$a\in \mathcal{Q}$,
$a\not=0$, is fixed, and
$v_\epsilon\in \cC([0,T];\R)$
satisfies $0\leq
v_\epsilon\leq1$,
$v_\epsilon(t)=1$ per $0\leq
t\leq\epsilon$,
$v_\epsilon(t)=0$ per $t\geq
2\epsilon$. Letting
$\epsilon\to0^+$ gives a
contradiction.

\end{remark}

\section*{Acknowledgements}
The authors would like to
thank Professor Luigi Rodino for
fruitful conversations and
comments.

\end{document}